\newcommand{\deltasw}{\Delta_{sw}\, }
\newcommand{\nablasw}{\nabla_{\!\! sw}\, }
\newcommand{\nablaswa}{\nabla_{\!\! sw}^\alpha\, }
\newcommand{\divsw}{\div_{\! sw}\, }
\newcommand{\divswd}{{\div}_{\! sw,i+1/2}\, }
\newcommand{\divswa}{\div_{\! sw}^\alpha\, }
\newcommand{\nablaswd}{\nabla_{\!\! sw,i}\, }
\newcommand{\nablaswdepsi}{\nabla_{\!\! sw,i}^{\varepsilon}\, }
\newcommand\R{\mathbb{R}}
\newcommand\1{{\bf 1}}
\renewcommand\div{{\rm div}}
\newcommand\sech{\,\mbox{sech}}
\newtheorem{theorem}{Theorem}[section]
\newtheorem{proposition}[theorem]{Proposition}
\newtheorem{corollary}[theorem]{Corollary}
\newenvironment{proof}[1][Proof]{\begin{trivlist}
\item[\hskip \labelsep {\bfseries #1}] \it }{$\blacksquare$\end{trivlist}}
\newtheorem{remark}[theorem]{Remark}
\def\dsp{\displaystyle}
\def\kxi{\begin{pmatrix}1\\ \xi\end{pmatrix}}
\definecolor{Red}{rgb}{1,0,0}
\definecolor{Blue}{rgb}{0,0,1}
\begin{document}

\title{A robust and stable numerical scheme for a depth-averaged Euler system.}

\author{N.~A\"issiouene, M.-O.~Bristeau, E.~Godlewski and J.~Sainte-Marie}
\date{\today}

\thispagestyle{empty}

\maketitle
\begin{abstract}
We propose an efficient numerical scheme for the resolution of a
 non-hydrostatic Saint-Venant type model. The model is a shallow
 water type approximation of the incompressbile Euler system with free
 surface and slightly differs from the Green-Naghdi model.

The numerical approximation relies on a kinetic interpretation of the
model and a projection-correction type scheme. The hyperbolic part of
the system is approximated using a kinetic based finite volume solver
and the correction step implies to solve an elliptic problem involving
the non-hydrostatic part of the pressure.

We prove the numerical scheme satisfies properties such as positivity,
well-balancing and a fully discrete entropy inequality. The numerical
scheme is confronted with various time-dependent analytical solutions. Notice that the numerical procedure
remains stable when the water depth tends to zero.

\end{abstract}

{\it Keywords~:} shallow water flows, dispersive terms, finite volumes,
finite differences, projection scheme, discrete entropy


\tableofcontents

\section{Introduction}

Non-linear shallow water equations model the dynamics of a shallow,
rotating layer of homogeneous incompressible fluid and are typically
used to describe vertically averaged flows in two or three dimensional domains
in terms of horizontal velocity and depth variations. The classical Saint-Venant system \cite{saint-venant}
with viscosity and friction \cite{saleri,gerbeau,marche} is particularly well-suited for the
study and numerical simulations of a large class of geophysical phenomena
such as rivers, lava flows, ice sheets, coastal domains, oceans or even run-off or avalanches
when being modified with adapted source terms \cite{Bouchut2003531,bouchut,mangeney07}. But the Saint-Venant system is built on the hydrostatic
assumption  consisting in neglecting the
vertical acceleration of the fluid. This assumption is valid for a large class of geophysical flows but is
restrictive in various situations where the dispersive effects~-- such as
those occuring in wave propagation~-- cannot
be neglected. As an example, neglecting the vertical acceleration in granular
flows or landslides leads to significantly overestimate the initial flow velocity
\cite{mangeney05,mangeney_nature1}, with strong implication for hazard
assessment.

The derivation of shallow water type models including the
non-hydrostatic effects has received an extensive
coverage~\cite{green,camassa1,bbm,nwogu,peregrine,JSM_DCDS,JSM_nhyd}
and numerical techniques for the approximation of these models have been
recently proposed~\cite{lannes_marche,duran_marche,JSM_CF,lemetayer}.

In~\cite{JSM_nhyd}, some of the authors have presented an original derivation process of a
  non-hydrostatic shallow water-type model approximating
  the incompressible Euler and Navier-Stokes systems with free
  surface where the closure relations are obtained by a
minimal energy constraint instead of an asymptotic expansion. The model slightly differs from the
well-known Green-Naghdi model~\cite{green}. The purpose of this paper
is to propose a robust and efficient numerical scheme for the model
described in~\cite{JSM_nhyd}. The numerical procedure, based on a
projection-correction strategy~\cite{chorin}, is endowed with
properties such as consistency, positivity, well-balancing and
satisfies a fully discrete entropy inequality. We emphasize that the
scheme behaves well when the water depth tends to zero and hence is
able to treat wet/dry interfaces. As far as the authors know, few numerical
methods endowed with such stability properties have been proposed for
such dispersive models extending the shallow water equations.

The paper is organized as follows. In Section~\ref{sec:model_nh}, we
recall the non-hydrostatic model proposed in~\cite{JSM_nhyd} and we
give a rewritting of the system. A kinetic description of the model is
given in Section~\ref{sec:kin} and it is used to derive the numerical
procedure and to prove its properties that are detailed in Sections~\ref{sec:num}
and~\ref{sec:entropy}. In Section~\ref{sec:num}, we first study the
semi-discrete schemes (in space and in time) and then we establish
some properties of the fully discrete scheme. In
Section~\ref{sec:entropy}, we prove the entropy inequality for the
fully discrete scheme. Stationary/transient analytical solutions
of the model are proposed in Section~\ref{sec:anal} and finally the numerical
scheme is confronted with analytical and experimental measurements.

\section{A depth-averaged Euler system}
\label{sec:model_nh}

Several strategies are possible for the derivation of shallow water type
models extending the Saint-Venant system. A usual process is to
assume potential flows and an extensive literature exists concerning
these
models~\cite{chazel1,chazel2,bonneton_lannes,lannes,lannes1,dutykh}. An
asymptotic expansion, going one step further than the classical
Saint-Venant system is also possible \cite{gerbeau,JSM_DCDS,JSM_M3AS}
but such an approach does not always lead to properly defined and/or
unique closure relations. In this paper, we start from a 
non-hydrostatic model derived and studied in~\cite{JSM_nhyd}, where the closure relations are obtained by a
minimal energy constraint.

The non-hydrostatic model we intend to discretize in this paper  has
several interesting properties
\begin{itemize}
\item the model formulation only involves first order partial
  derivatives and appears as a depth-averaged version of the Euler system,
\item the proposed model is similar to the well-known Green-Naghdi
  model
but keeps a natural expression of the topography source term.
\end{itemize}

\subsection{The model}

So we start from the system (see
Fig.~\ref{fig:notations} for the notations)
\begin{align}
&\frac{\partial H}{\partial t} + \frac{\partial}{\partial x} \bigl(H\overline{u}\bigr) = 0, \label{eq:euler_11}\\
&\frac{\partial}{\partial t}(H\overline{u}) + \frac{\partial}{\partial x}\left(H \overline{u}^2
+ \frac{g}{2}H^2 + H\overline{p}_{nh}\right) = - (gH +
2 \overline{p}_{nh})\frac{\partial
  z_b}{\partial x},\label{eq:euler_22}\\
&\frac{\partial}{\partial t}(H\overline{w})  + \frac{\partial}{\partial
  x}(H\overline{w}\overline{u}) = 2
\overline{p}_{nh},\label{eq:euler_33}\\
& \frac{\partial (H\overline{u})}{\partial x} -
\overline{u} \frac{\partial (H+2z_b)}{\partial x} + 2\overline{w} = 0.
\label{eq:wbar1}
\end{align}
We consider this system for
$$t > t_0 \quad\mbox{and}\quad x \in [0,L],$$
${\bf \overline{u}} = (\overline{u},\overline{w})^T$ denotes the velocity vector and
$\overline{p}_{nh}$ the non-hydrostatic part of the pressure. The
total pressure is given by
\begin{equation}
\overline{p} =\frac{g}{2}H + \overline{p}_{nh}.
\label{eq:pres_tot}
\end{equation}
The quantities $(\overline{u},\overline{w},\overline{p})$
correspond to vertically averaged values of the variables $(u,w,p)$
arising in the incompressible Euler system.
\begin{figure}[htbp]
\begin{center}
\resizebox{9cm}{!}{\input 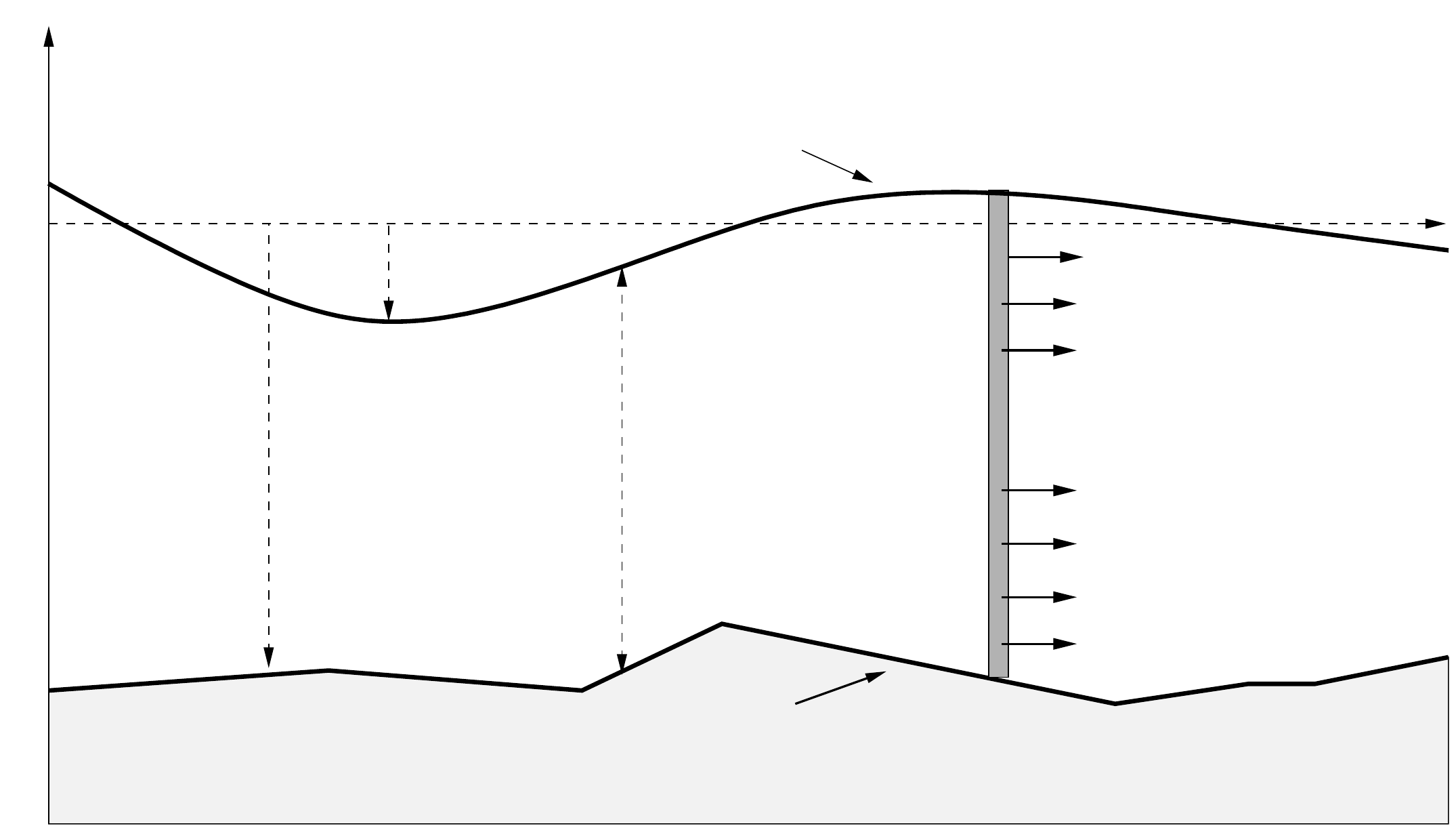_t}
\caption{Notations: water depth $H(x,t)$, free surface $H+z_b(x,t)$ and bottom $z_b(x,t)$.}
\label{fig:notations}
\end{center}
\end{figure}

The smooth solutions $H$, $\overline{u}$, $\overline{w}$, $\overline{p}_{nh}$
of the system~\eqref{eq:euler_11}-\eqref{eq:wbar1} also satisfy the energy balance
\begin{equation}
\frac{\partial}{\partial t}(\eta + gHz_b) + \frac{\partial}{\partial x}
\left(\overline{u}\bigl( \eta + gHz_b+\frac{g}{2}H^2 + H\overline{p}_{nh}\bigr)\right) = 0,
\label{eq:euler_55}
\end{equation}
where
\begin{equation}
\eta  =\frac{H(\overline{u}^2+\overline{w}^2)}{2}+ \frac{g}{2}H^2.
\label{eq:Ebar}
\end{equation}
We can rewrite~\eqref{eq:euler_55} under the form
\begin{equation}
\frac{\partial \tilde{\eta}}{\partial t} + \frac{\partial \widehat{G}}{\partial x}
= 0,
\label{eq:euler_66}
\end{equation}
with
\begin{equation}
\tilde{\eta} = \eta + gHz_b,\qquad G = \overline{u}\bigl(\eta+\frac{g}{2}H^2
\bigr),\qquad \tilde{G} =  G + gHz_b \overline{u},\qquad \widehat G = \widetilde G +
 H\overline{p}_{nh}\overline{u}.
\label{eq:def_entro}
\end{equation}
In the sequel, we will also use the definitions
\begin{eqnarray}
\eta_{hyd} & = & \frac{H}{2}\overline{u}^2 +
\frac{g}{2}(H)^2,\label{eq:eta_hyd}\\
\tilde{\eta}_{hyd} & = & \eta_{hyd} + gH z_b,\label{eq:etatilde}\\
G_{hyd} & = & \overline{u}
(\eta_{hyd} + \frac{g}{2}H^2), \label{eq:Ghyd}\\
\tilde{G}_{hyd} & = & \overline{u}
(\tilde{\eta}_{hyd} + \frac{g}{2}H^2), \label{eq:Gtilde}
\end{eqnarray}
which are functions of the unknowns.

The system~\eqref{eq:euler_11}-\eqref{eq:wbar1} is completed with initial and boundary
conditions that will be precised later.

\begin{remark}
Notice that simple manipulations of
Eqs.~\eqref{eq:euler_11} and~\eqref{eq:wbar1} lead to the relation
\begin{equation}
\frac{\partial}{\partial t}\left(\frac{H^2 + 2Hz_b}{2}\right)  +
\frac{\partial}{\partial x}\left(\frac{H^2 + 2Hz_b}{2}\overline{u}\right) = H \overline{w},\label{eq:euler_44}
\end{equation}
and Eq.~\eqref{eq:wbar1} could be replaced by~\eqref{eq:euler_44}. In this paper we
use~\eqref{eq:wbar1} which leads to keep the analogy with the divergence
operator in the Navier-Stokes equations as shown in the following paragraph.
\end{remark}

\subsection{A rewriting}
\label{subsec:3nd_writting}

Let us formally define the operator $\nablasw$
by
\begin{equation}
\nablasw f = \begin{pmatrix}
H\frac{\partial f}{\partial x} + \frac{\partial
  (H+2z_b)}{\partial x} f\\
-2 f
\end{pmatrix},
\label{eq:def_gradsw}
\end{equation}
that is a shallow water version of the gradient operator. Likewise, we
define a shallow water version of the divergence operator $\divsw$
under the form
\begin{equation}
\divsw {\bf u} =  \frac{\partial (Hu)}{\partial
  x} - u\frac{\partial (H+2z_b)}{\partial x} + 2 w,
\label{eq:def_divsw}
\end{equation}
where $ {\bf u}=(u,w)^T$. In definitions~\eqref{eq:def_gradsw}
and~\eqref{eq:def_divsw}, we assume the considered quantities are
smooth enough. The definition of the operators
$\nablasw$ and $\divsw$ implies we have the identity
\begin{equation}
\int_I \nablasw p . {\bf u}\ dx
= \left[ Hup \right]_{\partial I} -\int_I p  \divsw {\bf u}\ dx,\quad
\forall\ p,\ \forall {\bf u},
\label{eq:div_grad}
\end{equation}
where $I$ is any interval of $\R$. Notice that $\nablasw$ and $\divsw$
are $H$ and $z_b$ dependent operators and when necessary we will use
the notations $\nablasw(.;H)$ and $\divsw(.;H)$.

The system~\eqref{eq:euler_11}-\eqref{eq:wbar1} can be
rewritten under the compact form
\begin{align}
&\frac{\partial H}{\partial t} + \frac{\partial}{\partial x} \bigl(H\overline{u}\bigr) = 0,\label{eq:euler_sw1}\\
&\frac{\partial}{\partial t}(H\overline{\bf u}) +
\frac{\partial}{\partial x}\left( \overline{u} .
  H\overline{\bf u}\right) + \nabla_0 \left( \frac{g}{2}H^2 \right) +
\nablasw \overline{p}_{nh} =-gH\nabla_0 z_b,\label{eq:euler_sw2}\\
& \divsw \overline{\bf u} = 0,\label{eq:euler_sw3}
\end{align}
with the notation
$$\nabla_0 f = \begin{pmatrix} \frac{\partial f}{\partial x}\\ 0\end{pmatrix}.$$
Then the system~\eqref{eq:euler_sw1}-\eqref{eq:euler_sw3} appears as a
1d shallow water version of the 2d incompressible Euler system.

\subsection{Pressure equation}

For $H>0$, Eq.~\eqref{eq:euler_sw2} can also be written in the
nonconservative form
\begin{equation}
\frac{\partial \overline{\bf u}}{\partial t} +
\overline{u} \frac{\partial \overline{\bf u}}{\partial x} + g \nabla_0 H
+ \frac{1}{H}\nablasw \overline{p}_{nh} = -g \nabla_0 z_b,
\label{eq:momu}
\end{equation}
and applying~\eqref{eq:def_divsw} to Eq.~\eqref{eq:momu} leads
together with~\eqref{eq:euler_sw3} to the relation
\begin{multline}
-\frac{\partial}{\partial x} \left( H\frac{\partial
    \overline{p}_{nh}}{\partial x}\right) + \frac{1}{H}\left( 4 -
  H\frac{\partial^2 (H+2z_b)}{\partial x^2} +
  \left(\frac{\partial (H+2z_b)}{\partial x}\right)^2\right)
\overline{p}_{nh} = \\
2H \left(\frac{\partial
       \overline{u}}{\partial x}\right)^2 + 2 \overline{u}^2
   \frac{\partial^2 z_b}{\partial x^2} + gH \frac{\partial^2
     (H+z_b)}{\partial x^2} -2g \frac{\partial z_b}{\partial x}
   \frac{\partial (H+z_b)}{\partial x}.
\label{eq:sturm_liouville2}
\end{multline}
Notice that Eq.~\eqref{eq:sturm_liouville2} also reads
\begin{equation}
-\deltasw \overline{p}_{nh} = 2H \left(\frac{\partial
       \overline{u}}{\partial x}\right)^2 + 2 \overline{u}^2
   \frac{\partial^2 z_b}{\partial x^2} + gH \frac{\partial^2
     (H+z_b)}{\partial x^2} -2g \frac{\partial z_b}{\partial x}
   \frac{\partial (H+z_b)}{\partial x},
\label{eq:sturm_liouville3}
\end{equation}
with
$$\Delta_{sw} = \divsw \left(\frac{1}{H}\nablasw\right).$$
Conversely, Eq.~\eqref{eq:momu} with \eqref{eq:sturm_liouville3} give the divergence
free condition~\eqref{eq:euler_sw3}.
The resolution of Eq.~\eqref{eq:sturm_liouville3}~-- requiring the inversion
of a non local operator~-- gives the
expression for the non-hydrostatic pressure term $\overline{p}_{nh}$. A discrete
approximation of $\deltasw$ will be defined in paragraph~\ref{subsec:fully_discrete} and
used for the numerical solution of~\eqref{eq:euler_sw1}-\eqref{eq:euler_sw3}.


\begin{remark}
In all the writings of the model, the pressure term
$\overline{p}_{nh}$ appears as the Lagrange multiplier of the
divergence free condition. As in the incompressible Euler system, it
is not possible to derive {\it a priori} bounds for the
pressure terms. And hence, it is possible to obtain nonpositive values
for the total pressure $\overline{p}$ defined by~\eqref{eq:pres_tot}.

Such a situation means the fluid is no longer in contact with the
bottom and the formulation of the proposed model is no longer valid
since the bottom of the fluid has to be considered as a free surface.

Even if the proposed model can be modified to take into account these
situations, we do not consider them in this paper and we will propose in
paragraph~\ref{subsec:H0} a modification allowing to ensure that the
total pressure remains nonnegative.
\end{remark}

\subsection{Other formulations}

One of the most popular models for the description of long, dispersive
water waves is the Green-Naghdi model~\cite{green}. Several
derivations of the Green-Naghdi model have been proposed in the
literature~\cite{green,green1,su,miles}. For the mathematical justification of the model, the reader can refer
to~\cite{lannes,makarenko} and for its numerical approximation
to~\cite{lemetayer,chazel1,chazel2,JSM_CF}.

Introducing a parameter $\alpha$ and starting from the
system~\eqref{eq:euler_sw1}-\eqref{eq:euler_sw3}, we write
\begin{align}
&\frac{\partial H}{\partial t} + \frac{\partial}{\partial x} \bigl(H\overline{u}\bigr) = 0,\label{eq:gen_sw1}\\
&\frac{\partial}{\partial t}(H\overline{\bf u}) +
\frac{\partial}{\partial x}\left( \overline{u} .
  H\overline{\bf u}\right) + \nabla_0 \left( \frac{g}{2}H^2 \right) +
\nablaswa \overline{p}_{nh} = -gH\nabla_0 z_b,\label{eq:gen_sw2}\\
& \divswa \overline{\bf u} = 0,\label{eq:gen_sw3}
\end{align}
with
$$\nablaswa f = \begin{pmatrix}
H\frac{\partial f}{\partial x} + \frac{\partial
  (H+2z_b)}{\partial x} f\\
-\alpha f
\end{pmatrix},$$
and
\begin{equation}
\divswa {\bf u} = \frac{\partial (Hu)}{\partial
  x} - u\frac{\partial (H+2z_b)}{\partial x} + \alpha w.
\label{eq:gen_div}
\end{equation}
The value $\alpha=2$ gives exactly the
model~\eqref{eq:euler_11}-\eqref{eq:wbar1}. The system~\eqref{eq:gen_sw1}-\eqref{eq:gen_sw3} is completed with the energy balance
\begin{equation}
\frac{\partial \tilde{\eta}^\alpha}{\partial t} + \frac{\partial}{\partial x}
\left(\overline{u}\bigl(\tilde{\eta}^\alpha + \frac{g}{2}H^2 + H\overline{p}_{nh}\bigr)\right) = 0,
\label{eq:gen_sw4}
\end{equation}
where
\begin{equation}
\tilde{\eta}^\alpha = \frac{H}{2}\left(\overline{u}^2 +
  \frac{2\alpha-1}{3}\overline{w}^2 \right) + \frac{g}{2}H^2 + gHz_b.
\label{eq:gen_sw5}
\end{equation} 

Following~\cite{lemetayer} (see also~\cite{JSM_nhyd}), the
Green-Naghdi model with flat bottom reads
\begin{eqnarray}
& & \frac{\partial H}{\partial t} + \frac{\partial}{\partial x} \bigl(H\overline{u}\bigr) = 0,\label{eq:gn_11}\\
& & \frac{\partial (H\overline{u})}{\partial t} +
\frac{\partial}{\partial x} \left( H\overline{u}^2 +
  \frac{g}{2}H^2 + H \overline{p}_{gn}\right) = 0
,\label{eq:gn_22}\\
& &
  \frac{\partial}{\partial t} (H \overline{w}) +
  \frac{\partial }{\partial x} (H\overline{u}\overline{w})=
  \frac{3}{2}\overline{p}_{gn}\label{eq:gn_33},\\
& & \frac{\partial (H\overline{u})}{\partial
  x} - \overline{u}\frac{\partial H}{\partial x} + \frac{3}{2}
\overline{w} = 0, \label{eq:gn_44}
\end{eqnarray}
corresponding to~\eqref{eq:gen_sw1}-\eqref{eq:gen_sw3} with the value
$\alpha=3/2$. And hence it appears that the proposed model and the Green-Naghdi system
only differ from the value of $\alpha$ in Eqs.~\eqref{eq:gen_sw1},\eqref{eq:gen_sw3}.

Notice that the fundamental duality relation
$$\int_I \overline{p}_{nh}\divswa \overline{\bf u}\  dx = \left[ H\overline{u}\overline{p}_{nh} \right]_{\partial I} -\int_I \nablaswa \overline{p}_{nh}. \overline{\bf u}\ dx,$$
holds for any interval $I$.

It seems to the authors that the consistency of relations
Eqs.~\eqref{eq:gen_div} and~\eqref{eq:gen_sw5} with the divergence
free condition and the energy balance for the 2d Euler system is only obtained for the value $\alpha=2$ and in this paper we mainly focus on the choice $\alpha=2$ i.e. on the
model~\eqref{eq:euler_sw1}-\eqref{eq:euler_sw3}. But the numerical
procedure proposed and described in this paper is also valid for the model corresponding to any
nonnegative value of $\alpha$.

\section{Kinetic description}
\label{sec:kin}

In this section, we propose a kinetic interpretation for the system
\eqref{eq:euler_11}-\eqref{eq:euler_33} completed
with~\eqref{eq:euler_55}. The kinetic description will be used in
Section~\ref{sec:num} to derive a stable, accurate and robust
numerical scheme.

The kinetic approach consists in using a description of the microscopic
 behavior of the system \cite{perthame}. 
In this method, a fictitious density of particles is introduced and the equations
are considered at the microscopic scale, where no discontinuity
occurs. The kinetic interpretation of a system allows its transformation
into a family of linear transport equations, to which an upwinding discretization
is naturally applicable.

Following~\cite{perthame}, we introduce a real function $\chi$ defined on
$\mathbb{R}$, compactly supported and which has the following properties
\begin{equation}
\left\{\begin{array}{l}
\chi(-w) = \chi(w) \geq 0\\
\int_{\mathbb{R}} \chi(w)\ dw = \int_{\mathbb{R}} w^2\chi(w)\ dw = 1.
\end{array}\right.
\label{eq:chi1}
\end{equation}
Among all the functions $\chi$ satisfying~\eqref{eq:chi1}, one plays
an important role. Indeed, the choice
\begin{equation}
\chi(z) = \frac{1}{\pi} \left(1 - \frac{z^2}{4}\right)_+^{1/2},
\label{eq:chi0}
\end{equation}
with $x_+ \equiv \max(0,x)$, allows to ensure important stability
properties~\cite{simeoni,JSM_entro}. In the following, we keep this
special choice for $\chi$.

\subsection{Kinetic interpretation of the Saint-Venant system}

The classical Saint-Venant system~\cite{saint-venant,gerbeau}
corresponds to the hydrostatic part of the model~\eqref{eq:euler_11}-\eqref{eq:euler_33}, it reads
\begin{align}
&\frac{\partial H}{\partial t} + \frac{\partial}{\partial x} \bigl(H\overline{u}\bigr) = 0, \label{eq:sv_1}\\
&\frac{\partial}{\partial t}(H\overline{u}) + \frac{\partial}{\partial x}\left(H \overline{u}^2
+ \frac{g}{2}H^2 \right) = - gH \frac{\partial
  z_b}{\partial x},\label{eq:sv_2}
\end{align}
completed with the entropy inequality
\begin{eqnarray}
\frac{\partial \tilde{\eta}_{hyd}}{\partial t} + \frac{\partial \tilde{G}_{hyd}}{\partial x} \leq 0,\label{eq:sv_3}
\end{eqnarray}
with $\tilde{\eta}_{hyd}$ and~$\tilde{G}_{hyd}$ defined by~\eqref{eq:etatilde},\eqref{eq:Gtilde}.

Let us construct the density of particles $M(x,t,\xi)$ playing the
role of a Maxwellian: the microscopic density of particles present at
time $t$, at the abscissa $x$ and with
velocity $\xi$ given by
\begin{equation}
M(H,\overline{u},\xi) = \frac{H}{c} \chi\left(\frac{\xi - \overline{u}}{c}\right)
= \frac{1}{g\pi}\Bigl(2gH-(\xi-\overline{u})^2\Bigr)_+^{1/2},
\label{eq:kinmaxw}
\end{equation}
with $c = \sqrt{\frac{gH}{2}}$, $\xi\in\R$. The equilibrium defined by~\eqref{eq:kinmaxw}
corresponds to the classical kinetic Maxwellian equilibrium, used in \cite{simeoni} for example.
It satisfies the following moment relations,
\begin{equation}\begin{array}{c}
	\dsp \int_\R \kxi M(H,\overline{u},\xi)\,d\xi= \left(\begin{array}{c}
          H\\ H \overline{u} \end{array}\right),\\
	\dsp\int_\R \xi^2 M(H,\overline{u},\xi)\,d\xi=H \overline{u}^2+g\frac{H^2}{2}.
	\label{eq:kinmom}
	\end{array}
\end{equation}
The interest of the particular form \eqref{eq:kinmaxw} lies in its
link with a kinetic entropy, see~\cite{JSM_entro} where the properties of
$H_K(f,\xi,z)$ are studied, $H_K$ refers to the kinetic entropy used in~\cite{JSM_entro}.
Consider the kinetic entropy,
\begin{equation}
	H_K(f,\xi,z)=\frac{\xi^2}{2}f+\frac{g^2\pi^2}{6}f^3+gzf,
	\label{eq:kinH}
\end{equation}
where $f\geq 0$, $\xi\in\R$ and $z\in\R$, and its version without topography
\begin{equation}
	H_{K,0}(f,\xi)=\frac{\xi^2}{2}f+\frac{g^2\pi^2}{6}f^3.
	\label{eq:kinH0}
\end{equation}
Then one can check the relations
\begin{equation}
	\int_\R H_K\bigl(M(H,\overline{u},\xi),\xi,z_b\bigr)\,d\xi= \tilde{\eta}_{hyd},
	\label{eq:kinint}
\end{equation}
\begin{equation}
	\int_\R \xi H_K\bigl(M(H,\overline{u},\xi),\xi,z_b\bigr)\,d\xi=
        \tilde{G}_{hyd}.
	\label{eq:kinflint}
\end{equation}

These definitions allow us to obtain a kinetic representation of the
Saint-Venant system~\cite{simeoni}.
\begin{proposition}
The pair of functions $(H, H\overline{u})$ is a strong solution of the
Saint-Venant system~\eqref{eq:sv_1}-\eqref{eq:sv_2} if and only if $M(H,\overline{u},\xi)$ satisfies
the kinetic equation
\begin{equation}
({\cal B})\qquad \frac{\partial M}{\partial t} + \xi \frac{\partial M}{\partial x} -
g\frac{\partial z_b}{\partial x}\frac{\partial M}{\partial \xi}  = Q,
\label{eq:gibbs}
\end{equation}
for some ``collision term'' $Q(x,t,\xi)$ which satisfies, for a.e. $(x,t)$, 
\begin{equation}
\int_{\mathbb{R}}  Q\ d\xi = \int_{\mathbb{R}}  \xi Q\ d\xi = 0.
\label{eq:collision}
\end{equation}
\label{prop:kinetic_sv}
\end{proposition}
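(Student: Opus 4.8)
The plan is to prove the equivalence by working with the two lowest velocity moments of the kinetic equation. The key structural fact is the pair of moment relations~\eqref{eq:kinmom}: integrating $M$ against $1$ and $\xi$ recovers $(H, H\overline{u})$, while integrating against $\xi^2$ recovers the momentum flux $H\overline{u}^2 + \tfrac{g}{2}H^2$. Because of this, the Saint-Venant system is exactly what one obtains by taking the $1$ and $\xi$ moments of a transport equation whose Maxwellian is~\eqref{eq:kinmaxw}. So the whole argument reduces to checking that the moments of the operator on the left-hand side of $(\mathcal B)$ reproduce the left-hand sides of~\eqref{eq:sv_1}--\eqref{eq:sv_2}, and that the compatibility conditions~\eqref{eq:collision} are precisely the statement that the collision term contributes nothing to these two moments.

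First I would establish the forward implication. Assume $(H,H\overline{u})$ solves~\eqref{eq:sv_1}--\eqref{eq:sv_2}, and \emph{define} $Q$ by~\eqref{eq:gibbs}, i.e.\ set $Q := \partial_t M + \xi\,\partial_x M - g\,\partial_x z_b\,\partial_\xi M$. There is nothing to prove about the existence of such a $Q$; the content is that it satisfies~\eqref{eq:collision}. I would integrate this expression in $\xi$ against $1$ and against $\xi$. For the $1$-moment: $\int \partial_t M\,d\xi = \partial_t H$ and $\int \xi\,\partial_x M\,d\xi = \partial_x(H\overline{u})$ by~\eqref{eq:kinmom}, while $\int \partial_\xi M\,d\xi = 0$ since $M$ is compactly supported in $\xi$; the sum is the left-hand side of~\eqref{eq:sv_1}, which vanishes. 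For the $\xi$-moment: $\int \xi\,\partial_t M\,d\xi = \partial_t(H\overline{u})$, $\int \xi^2\,\partial_x M\,d\xi = \partial_x(H\overline{u}^2 + \tfrac{g}{2}H^2)$ by the second relation in~\eqref{eq:kinmom}, and an integration by parts gives $\int \xi\,\partial_\xi M\,d\xi = -\int M\,d\xi = -H$, so the $-g\,\partial_x z_b$ term produces $+gH\,\partial_x z_b$; collecting terms reproduces~\eqref{eq:sv_2}, which again vanishes. Hence $\int Q\,d\xi = \int \xi Q\,d\xi = 0$.

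For the converse, I would assume $M$ satisfies $(\mathcal B)$ with a $Q$ obeying~\eqref{eq:collision}, and run the same two moment computations in reverse: taking the $1$- and $\xi$-moments of $(\mathcal B)$ and using~\eqref{eq:collision} to kill the right-hand side, the identical integral evaluations yield exactly equations~\eqref{eq:sv_1}--\eqref{eq:sv_2}. A point worth stating carefully is that the relevant moments of $M$ are genuinely $(H, H\overline{u})$ and the correct flux, which is guaranteed by~\eqref{eq:kinmom} for the specific Maxwellian; so the moment system is closed at equilibrium and no closure error appears.

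I expect the only genuine subtlety, as opposed to routine moment bookkeeping, to be the justification of differentiating under the integral sign and of the integration by parts in $\xi$. These are legitimate here because $\chi$ in~\eqref{eq:chi0} is compactly supported, so $M(H,\overline{u},\cdot)$ has compact support in $\xi$ (the support being $\{|\xi-\overline{u}|\le 2c\}$) and the boundary terms in $\int \xi\,\partial_\xi M\,d\xi$ vanish; for a strong (smooth) solution the dominated-convergence hypotheses for moving the $t$- and $x$-derivatives outside the integral hold as well. I would note this compact-support property explicitly as the mechanism that makes all boundary contributions disappear, and otherwise the proof is a direct verification via the two moment identities.
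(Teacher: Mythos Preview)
Your proposal is correct and follows exactly the approach the paper indicates: take the $1$- and $\xi$-moments of $(\mathcal B)$ and use the moment relations~\eqref{eq:kinmom} together with the compact support of $M$ in $\xi$. The paper itself gives essentially no detail beyond ``using~\eqref{eq:kinmom}, the proof relies on a very obvious computation,'' so your write-up is a faithful expansion of that.
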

\begin{proof}[Proof of prop.~\ref{prop:kinetic_sv}]
Using~\eqref{eq:kinmom}, the proof relies on a very obvious computation.
\end{proof}
\begin{remark}
The proposition~\ref{prop:kinetic_sv} remains valid if instead
of~\eqref{eq:chi0}, the equilibrium $M$ is built with any function satisfying~\eqref{eq:chi1}.
\end{remark}
This proposition produces a very useful consequence. The non-linear
shallow water system can be viewed as a single linear equation for a
scalar function $M$ depending nonlinearly on $H$ and $\overline{u}$, for which it is easier to find simple
numerical schemes with good theoretical properties.

\subsection{Kinetic interpretation of the depth-averaged Euler system}

Since we take into account the non-hydrostatic effects of the pressure,
the microscopic vertical velocity $\gamma$ of the particles has to be considered and
we now construct the new density of particles $M(x,t,\xi,\gamma)$
defined by a Gibbs equilibrium: the microscopic density of particles present at
time $t$, abscissa $x$ and with
microscopic horizontal velocity $\xi$ and microscopic vertical velocity $\gamma$ is given by
\begin{equation}
M(x,t,\xi,\gamma) = \frac{H}{c} \chi\left(\frac{\xi -
\overline{u}}{c}\right)\delta\left(\gamma -
\overline{w}\right),
\label{eq:Mnh}
\end{equation}
where $\delta$ is the Dirac distribution and $c=\sqrt{\frac{gH}{2}}$.

Then we have the following proposition.
\begin{proposition}
For a given $\overline{p}_{nh}$, the functions
$(H,\overline{u},\overline{w})$ satisfying the divergence free
condition~\eqref{eq:def_divsw}, are strong solutions of
 the depth-averaged Euler system described in
 \eqref{eq:euler_11}-\eqref{eq:euler_33},\eqref{eq:euler_55} if and only if the
 equilibrium $M(x,t,\xi,\gamma)$ is solution of the kinetic equations
\begin{multline}
({\cal B}_{nh}) \qquad \frac{\partial M}{\partial t} +
\xi\frac{\partial M}{\partial x} - \left(
  \left( g+\frac{2\overline{p}_{nh}}{H} \right) \frac{\partial
    z_b}{\partial x} + \frac{1}{H} \frac{\partial}{\partial x}(H\overline{p}_{nh})\right)\frac{\partial
M}{\partial \xi} \\
+ \frac{2\overline{p}_{nh}}{H}
\frac{\partial M}{\partial \gamma}
= Q_{nh},\label{eq:gibbs+_1}
\end{multline}
where $Q_{nh}=Q_{nh}(x,t,\xi,\gamma)$ is  a ``collision term'' satisfying
\begin{eqnarray}
& & \int_{\mathbb{R}^2} Q_{nh}\ d\xi d\gamma = \int_{\mathbb{R}^2} \xi Q_{nh}\ d\xi d\gamma = \int_{\mathbb{R}^2} \gamma Q_{nh}  d\xi d\gamma = 0. \label{eq:collision1}
\end{eqnarray}
Additionally, the solution is an entropy solution if
\begin{equation}
\int_{\mathbb{R}^2} \left(\frac{\xi^2 + \gamma^2}{2} + \frac{g^2\pi^2}{2}M^2 + gz_b\right) Q_{nh} d\gamma d\xi  \leq 0.
\label{eq:calc_kin_av}
\end{equation}
\label{prop:euler_av_kinetic}
\end{proposition}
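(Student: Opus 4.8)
The plan is to mimic the proof of Proposition~\ref{prop:kinetic_sv} but now tracking three moments (mass, horizontal momentum, vertical momentum) plus the energy inequality. The strategy is to define $Q_{nh}$ by the kinetic equation itself, i.e. set
\begin{equation*}
Q_{nh} := \frac{\partial M}{\partial t} + \xi\frac{\partial M}{\partial x} - \left(\Bigl(g+\frac{2\overline{p}_{nh}}{H}\Bigr)\frac{\partial z_b}{\partial x} + \frac{1}{H}\frac{\partial}{\partial x}(H\overline{p}_{nh})\right)\frac{\partial M}{\partial\xi} + \frac{2\overline{p}_{nh}}{H}\frac{\partial M}{\partial\gamma},
\end{equation*}
and then show that the three moment conditions~\eqref{eq:collision1} are \emph{equivalent} to the three evolution equations~\eqref{eq:euler_11}--\eqref{eq:euler_33} together with the constraint~\eqref{eq:wbar1}. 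Because $M$ in~\eqref{eq:Mnh} factorizes as the Saint-Venant Maxwellian in $\xi$ times $\delta(\gamma-\overline{w})$ in $\gamma$, the $\gamma$-integration is immediate and the $\xi$-moments reduce to the ones already recorded in~\eqref{eq:kinmom}. So the computation is genuinely a matter of integrating the displayed transport equation against $1$, $\xi$, and $\gamma$.

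First I would integrate $Q_{nh}$ against $1\,d\xi\,d\gamma$. The $\partial_\xi M$ and $\partial_\gamma M$ terms integrate to zero (boundary terms vanish by compact support in $\xi$ and by the Dirac structure in $\gamma$), and the $\partial_t$ and $\xi\partial_x$ terms give exactly $\partial_t H + \partial_x(H\overline{u})$ using the first line of~\eqref{eq:kinmom}; hence the zeroth moment condition is equivalent to~\eqref{eq:euler_11}. Next, integrating against $\xi$: the $\partial_t$ and $\xi\partial_x$ terms produce $\partial_t(H\overline{u}) + \partial_x(H\overline{u}^2 + \tfrac{g}{2}H^2)$ via the second moment in~\eqref{eq:kinmom}, while the $\partial_\xi$ term, after an integration by parts that turns $\int\xi\,\partial_\xi M\,d\xi$ into $-\int M\,d\xi = -H$, contributes precisely the force $+\bigl(gH + 2\overline{p}_{nh}\bigr)\partial_x z_b + \partial_x(H\overline{p}_{nh})$; the $\partial_\gamma$ term vanishes against $\xi$. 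Collecting terms and comparing with~\eqref{eq:euler_22} (rewritten so that $\partial_x(H\overline{p}_{nh})$ is moved to the flux) shows the first-$\xi$-moment condition is equivalent to the horizontal momentum equation. Finally, integrating against $\gamma$: here $M=\frac{H}{c}\chi\delta(\gamma-\overline{w})$ gives $\int\gamma M\,d\gamma = \overline{w}\cdot(\text{mass density in }\xi)$, so the transport terms reproduce $\partial_t(H\overline{w}) + \partial_x(H\overline{u}\,\overline{w})$, the $\partial_\xi$ term drops (odd/vanishing boundary), and the $\partial_\gamma$ term, after integrating by parts via $\int\gamma\,\partial_\gamma M\,d\gamma = -\int M\,d\gamma$, yields $-\frac{2\overline{p}_{nh}}{H}\cdot H = -2\overline{p}_{nh}$, matching~\eqref{eq:euler_33}. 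The divergence-free constraint~\eqref{eq:wbar1} is assumed as a hypothesis on $(H,\overline{u},\overline{w})$, so it need not be recovered; it simply accompanies the equivalence.

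For the entropy part~\eqref{eq:calc_kin_av}, the plan is to multiply the kinetic equation by the kinetic entropy density $\partial_f H_K$ evaluated at $M$ — concretely $\frac{\xi^2+\gamma^2}{2} + \frac{g^2\pi^2}{2}M^2 + gz_b$, which is $\partial_f$ of the entropy $\frac{\xi^2+\gamma^2}{2}f + \frac{g^2\pi^2}{6}f^3 + gz_b f$ — and integrate in $(\xi,\gamma)$. Using the moment identities~\eqref{eq:kinint}--\eqref{eq:kinflint} (extended by the trivial $\gamma$-Gaussian so that the $\gamma^2/2$ term reproduces the $\frac{H}{2}\overline{w}^2$ contribution in $\eta$) together with the chain-rule manipulations on the $\partial_\xi$ and $\partial_\gamma$ terms, the left-hand side should collapse to $\partial_t\tilde{\eta} + \partial_x\widehat G$ up to the terms generated by the $\overline{p}_{nh}$ forcing, which cancel against the flux $H\overline{p}_{nh}\overline{u}$ added in $\widehat G$; by~\eqref{eq:euler_55}--\eqref{eq:euler_66} this total is zero for smooth solutions, so for the weak (entropy) solution it is $\le 0$, which is exactly~\eqref{eq:calc_kin_av}.

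The main obstacle I anticipate is the entropy computation rather than the three moment equations. Matching the cubic term $\frac{g^2\pi^2}{2}M^2$ integrated against the transport operator to the macroscopic flux $\widehat G$ requires carefully handling the nonlinear dependence of $M$ on $H$ and $\overline{u}$ and verifying that the topography and non-hydrostatic forcing terms assemble into the correct source-free balance~\eqref{eq:euler_66}; this is where the special choice~\eqref{eq:chi0} of $\chi$ is essential, since it is precisely the equilibrium that minimizes the kinetic entropy and makes the $M^3$ moment close correctly. The $\partial_\xi$ integration by parts involving $\partial_f H_K \cdot \partial_\xi M$ and the analogous $\partial_\gamma$ term must be organized so the forcing contributions reconstruct the advective part of $\widehat G$ exactly; keeping the $z_b$-dependent pieces consistent between the $gz_b f$ part of $H_K$ and the $gHz_b$ part of $\tilde\eta$ is the delicate bookkeeping step. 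Everything else is the "very obvious computation" flagged after Proposition~\ref{prop:kinetic_sv}.
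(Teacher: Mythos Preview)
Your proposal is correct and follows exactly the approach indicated in the paper: the paper's proof simply instructs one to integrate the kinetic relation $(\mathcal{B}_{nh})$ against $1$, $\xi$, $\gamma$ for the moment equations and against $\tfrac{\xi^2+\gamma^2}{2}+\tfrac{g^2\pi^2}{2}M^2+gz_b$ for the energy balance, and you carry out precisely these computations with the right bookkeeping. One cosmetic slip: you refer to the $\gamma$-factor as a ``$\gamma$-Gaussian'' when it is the Dirac mass $\delta(\gamma-\overline{w})$, but you use it correctly throughout.
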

Notice that in the case of the Saint-Venant system, the particular
choice of $M$ defined by~\eqref{eq:kinmaxw} ensures
$$\int_{\mathbb{R}} \left(\frac{\xi^2}{2} + \frac{g^2\pi^2}{2}M^2 + gz_b\right) Q d\xi  = 0.$$
\begin{proof}[Proof of prop.~\ref{prop:euler_av_kinetic}]
From the definitions \eqref{eq:chi1},\eqref{eq:Mnh} and
\eqref{eq:collision1}, the proof results from easy computations,
namely by integrating the relation~\eqref{eq:gibbs+_1}
$$\int_{\R^2} ({\cal B}_{nh})\ d\xi d\gamma, \quad \int_{\R^2} \xi ({\cal B}_{nh})\
d\xi d\gamma,\quad\mbox{and}\quad \int_{\R^2} \gamma ({\cal B}_{nh})\
d\xi d\gamma.$$
Likewise, the energy balance is obtained calculating the quantity
$$\int_{\R^2} \left( \frac{\xi^2+\gamma^2}{2} + \frac{g^2\pi^2}{2}M^2 + gz_b\right)({\cal B}_{nh})\ d\xi d\gamma.$$
\end{proof}
Because of the special role of the equation~\eqref{eq:wbar1}, it is
not easy to describe it at the kinetic level.

\section{Numerical scheme}
\label{sec:num}

In this section we propose a discretization for the
system~\eqref{eq:euler_sw1}-\eqref{eq:euler_sw3}. In order to process
step by step, we first establish some properties for the
semi-discrete schemes in time and then in space. Then we study the fully
discrete scheme.

For the sake of simplicity, the notations with~~$\bar{}$~ are dropped. We write the system~\eqref{eq:euler_sw1}-\eqref{eq:euler_sw2} in a condensed form
\begin{equation}
\frac{\partial X}{\partial t} + \frac{\partial}{\partial x}F(X) +
R_{nh}= S(X),
\label{eq:form_fin}
\end{equation}
with
\begin{eqnarray}
X = \left(\begin{array}{c} H\\ Hu\\ Hw\end{array}\right),\quad
F(X) = \left(\begin{array}{c} Hu\\ Hu^2 +
    \frac{g}{2}H^2\\ Huw
\end{array}\right),\quad S(X) =
\left(\begin{array}{c} 0\\ -gH\nabla_0 z_b\end{array}\right),
\label{eq:X}
\end{eqnarray}
and
$$R_{nh} = \left(\begin{array}{c} 0\\ \nablasw
    p_{nh} \end{array}\right),$$
with $\nablasw p_{nh}$ defined by~\eqref{eq:def_gradsw}. The
expression of $p_{nh}$ is defined by~\eqref{eq:sturm_liouville3} and ensures the
divergence free condition~\eqref{eq:euler_sw3} is satisfied.

\subsection{Fractional step scheme}
\label{subsec:gen_scheme}

For the time discretization, we denote $t^n = \sum_{k \leq n} \Delta t^k$ where the time steps $\Delta t^k$ will be
precised later though a CFL condition. Following~\cite{chorin}, we use
an operator splitting technique resulting in a two step scheme
\begin{eqnarray}
& & \frac{X^{n+1/2} - X^n}{\Delta t^n} + \frac{\partial}{\partial x}F(X^n)
  = S(X^n),\label{eq:form_fin1}\\
& & \frac{X^{n+1} - X^{n+1/2}}{\Delta t^n} + R_{nh}^{n+1}
  = 0.\label{eq:form_fin2}
\end{eqnarray}
The non-hydrostatic part of the pressure $p_{nh}^{n+1}$ is defined
by~\eqref{eq:sturm_liouville3} and ensures, as already said, that the divergence
free constraint~\eqref{eq:euler_sw3} is satisfied i.e.
\begin{equation}
\divsw {\bf u}^{n+1} = 0.
\label{eq:euler_sw3_d}
\end{equation}
The discretization of Eq.~\eqref{eq:sturm_liouville3} is given
hereafter. The system~\eqref{eq:form_fin1}-\eqref{eq:form_fin2} has to be
completed with suitable boundary conditions that will be precised
later, see paragraph~\ref{subsec:BC}.

The prediction step~\eqref{eq:form_fin1} consists in the resolution of the
Saint-Venant system and a transport equation for $(Hw)^{n+1/2}$ i.e.
\begin{eqnarray}
& & H^{n+1/2} = H^n - \Delta t^n \frac{\partial (Hu)^n}{\partial x},\label{eq:semi_dis1}\\
& & (Hu)^{n+1/2} = (Hu)^n - \Delta t^n \frac{\partial }{\partial x}
\left(Hu^2 + \frac{g}{2}H^2\right)^n - \Delta t^n gH^n \frac{\partial z_b}{\partial x},\label{eq:semi_dis2}\\
& & (Hw)^{n+1/2} = (Hw)^n - \Delta t^n \frac{\partial (Hwu)^n}{\partial x},\label{eq:semi_dis3}
\end{eqnarray}
and the correction step~\eqref{eq:form_fin2} writes
\begin{eqnarray}
& & H^{n+1} = H^{n+1/2},\label{eq:semi_disH}\\
& &  {\bf u}^{n+1} = {\bf u}^{n+1/2} - \frac{\Delta t^n}{H^{n+1}} \nablasw p_{nh}^{n+1},\label{eq:semi_dis4}
\end{eqnarray}
with
$${\bf u}^{n+1}=\biggl(
  \frac{(Hu)^{n+1}}{H^{n+1}},\frac{(Hw)^{n+1}}{H^{n+1}}\biggr)^T.$$
More precisely, due to the expression of the operator $\nablasw$ given
in~\eqref{eq:def_gradsw}, the
notations $\nablasw p_{nh}^{n+1}$ means $\nablasw
(p_{nh}^{n+1};H^{n+1})$ and the same remark holds for the operator $\divsw$. 
Then inserting ${\bf u}^{n+1}=(u ^{n+1},w ^{n+1})$ satisfying~\eqref{eq:semi_dis4} in
relation~\eqref{eq:euler_sw3_d} gives the governing equation for $p_{nh}^{n+1}$
\begin{equation}
\divsw \left(\frac{1}{H^{n+1}}\nablasw p_{nh}^{n+1} \right) = \frac{1}{\Delta t^n} \divsw
\biggl(
  \frac{(Hu)^{n+1/2}}{H^{n+1/2}},\frac{(Hw)^{n+1/2}}{H^{n+1/2}}\biggr)^T, 
\label{eq:elliptic}
\end{equation}
that is a discrete version of~\eqref{eq:sturm_liouville3}. 
Notice also that in Eq.~\eqref{eq:semi_dis4} we have used the fact that
$H^{n+1} = H^{n+1/2}$. It appears that the right hand side
of~\eqref{eq:elliptic} can be evaluated by the conservative
variables $(H,Hu,Hw)^{n+1/2}$ given by~\eqref{eq:semi_dis1}-\eqref{eq:semi_dis3} , the first
step of the time scheme.

\begin{proposition}
The scheme~\eqref{eq:form_fin1}-\eqref{eq:euler_sw3_d}
satisfies a semi-discrete (in time) entropy inequality of the
form
$$\tilde{\eta}^{n+1} \leq \tilde{\eta}^n -\Delta t^n
\frac{\partial}{\partial x}\left( \tilde{G}^n +
  (Hu)^{n+1}p_{nh}^{n+1} \right) + (\Delta t^n)^2 {\cal O}\biggl(\left\|
F'(X^n)+S(X^n)\right\|^2_2\biggr),$$
with
$$\tilde{\eta}^n = \tilde{\eta}(X^n) = \frac{H^n}{2} \Bigl( (u^n)^2 +  (w^n)^2 \Bigr) +
\frac{g}{2} (H^n)^2 + gH^nz_b,$$
defined by~\eqref{eq:def_entro}.
\label{prop:stability2}
\end{proposition}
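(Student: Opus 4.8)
The plan is to establish the entropy inequality by analyzing each of the two fractional steps separately, exploiting the kinetic structure for the prediction step and the duality relation~\eqref{eq:div_grad} for the correction step. I would first treat the prediction step~\eqref{eq:semi_dis1}--\eqref{eq:semi_dis3}. Since this step is exactly the Saint-Venant system (for $H$ and $Hu$) supplemented by a passive transport equation for $Hw$, the kinetic interpretation of Proposition~\ref{prop:kinetic_sv} applies. The natural strategy is to write $M^{n+1/2}$ as the upwinded (free-transport) update of the Maxwellian $M(H^n,u^n,\xi)$, integrate the kinetic entropy $H_K$ against this update, and use convexity of $H_K$ in its first argument together with the minimization property of the Maxwellian (the relations~\eqref{eq:kinint}--\eqref{eq:kinflint}) to obtain the hydrostatic entropy inequality. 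This yields a bound of the form
\begin{equation*}
\tilde{\eta}_{hyd}^{n+1/2} \leq \tilde{\eta}_{hyd}^{n} - \Delta t^n \frac{\partial \tilde{G}_{hyd}^n}{\partial x} + (\Delta t^n)^2\,{\cal O}\bigl(\|F'(X^n)+S(X^n)\|_2^2\bigr),
\end{equation*}
where the quadratic remainder collects the second-order Taylor terms coming from the discrete-in-time expansion. I would simultaneously track the kinetic energy carried by the vertical momentum $Hw$, noting that the transport equation~\eqref{eq:semi_dis3} contributes the $\tfrac{H}{2}w^2$ part of $\eta$ exactly at the continuous level and produces an analogous controlled remainder.

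Next I would treat the correction step~\eqref{eq:semi_disH}--\eqref{eq:semi_dis4}. Here $H$ is unchanged, so only the kinetic energy $\tfrac{H}{2}(u^2+w^2)$ is affected. The key identity is that taking the scalar product of~\eqref{eq:semi_dis4} with $H^{n+1}\mathbf{u}^{n+1}$ and using the elementary relation $a\cdot(a-b) = \tfrac12|a|^2 - \tfrac12|b|^2 + \tfrac12|a-b|^2$ gives
\begin{equation*}
\frac{H^{n+1}}{2}|\mathbf{u}^{n+1}|^2 = \frac{H^{n+1}}{2}|\mathbf{u}^{n+1/2}|^2 - \frac{(\Delta t^n)^2}{2H^{n+1}}|\nablasw p_{nh}^{n+1}|^2 - \Delta t^n\, \mathbf{u}^{n+1}\cdot \nablasw p_{nh}^{n+1}.
\end{equation*}
The crucial point is that the last term, after integration, is handled by the duality relation~\eqref{eq:div_grad}: since $\divsw \mathbf{u}^{n+1}=0$ by~\eqref{eq:euler_sw3_d}, the volume term $\int p_{nh}^{n+1}\divsw\mathbf{u}^{n+1}\,dx$ vanishes and only the boundary flux $[H u\,p_{nh}]$ survives, which is precisely the extra flux $(Hu)^{n+1}p_{nh}^{n+1}$ appearing in the statement. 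The negative quadratic term $-\tfrac{(\Delta t^n)^2}{2H^{n+1}}|\nablasw p_{nh}^{n+1}|^2$ is dissipative and can simply be discarded to obtain an inequality.

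Finally I would combine the two steps. Because $H^{n+1}=H^{n+1/2}$ and the potential/topography contributions $\tfrac{g}{2}H^2 + gHz_b$ are untouched by the correction, the hydrostatic bound from the prediction step and the kinetic-energy identity from the correction step add up to the claimed inequality for $\tilde{\eta}$, with $\tilde G^n$ and the pressure flux $(Hu)^{n+1}p_{nh}^{n+1}$ assembling as in~\eqref{eq:def_entro}. I expect the main obstacle to be the prediction step: proving the hydrostatic entropy inequality with the \emph{explicit} quadratic-in-$\Delta t^n$ remainder requires carefully carrying out the Taylor expansion in time of the kinetic update and verifying that the convexity argument for $H_K$ indeed absorbs the leading-order defect, while showing that the leftover terms are genuinely ${\cal O}(\|F'(X^n)+S(X^n)\|_2^2)$. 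A secondary technical subtlety is the treatment of the Dirac factor in the vertical-velocity Maxwellian~\eqref{eq:Mnh}, so that the transport of $Hw$ in the prediction step contributes cleanly to $\eta$ without spurious dissipation; I would verify this by working directly at the macroscopic level for the $w$-component rather than relying on the kinetic formalism there.
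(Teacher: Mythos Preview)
Your treatment of the correction step is correct and essentially identical to the paper's: multiply~\eqref{eq:semi_dis4} by $H^{n+1}\mathbf{u}^{n+1}$, use the polar identity to extract the nonpositive term $-\tfrac{H^{n+1}}{2}|\mathbf{u}^{n+1}-\mathbf{u}^{n+1/2}|^2$, and invoke the duality~\eqref{eq:div_grad} together with the constraint~\eqref{eq:euler_sw3_d} to turn $\mathbf{u}^{n+1}\cdot\nablasw p_{nh}^{n+1}$ into the pure flux $\partial_x\bigl((Hu)^{n+1}p_{nh}^{n+1}\bigr)$.

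Where you diverge from the paper is the prediction step. The paper does \emph{not} use the kinetic interpretation here; it works entirely at the macroscopic level. One simply multiplies~\eqref{eq:semi_dis2} by $u^n$ and~\eqref{eq:semi_dis3} by $w^n$ and computes directly, arriving at exact identities of the form
\[
\tilde{\eta}_{hyd}^{n+1/2}=\tilde{\eta}_{hyd}^{n}-\Delta t^n\,\partial_x\tilde{G}_{hyd}^n+\tfrac{g}{2}(H^{n+1/2}-H^n)^2+\tfrac{H^{n+1/2}}{2}(u^{n+1/2}-u^n)^2,
\]
and similarly for the $\tfrac{H}{2}w^2$ contribution. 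The last two terms are nonnegative and manifestly $(\Delta t^n)^2\,{\cal O}(\|F'(X^n)+S(X^n)\|_2^2)$, which is exactly the remainder in the statement. This is shorter and more transparent than your kinetic route: no Maxwellian minimization, no Taylor expansion of $H_K$, no need to reconcile the Dirac factor in~\eqref{eq:Mnh}. The kinetic machinery is reserved in the paper for the \emph{fully} discrete analysis (Section~\ref{sec:entropy}), where upwinding genuinely enters; your mention of an ``upwinded (free-transport) update'' in the time-semidiscrete context suggests you are importing structure that is not yet present. Your plan is not wrong, but it is an unnecessarily heavy detour for what is, in the paper, a two-line macroscopic energy estimate.
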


\begin{proof}[Proof of prop.~\ref{prop:stability2}]
Multiplying Eq.~\eqref{eq:semi_dis2} 
by $u^n$, we obtain after classical manipulations
\begin{multline}
\tilde{\eta}_{hyd}^{n+1/2}= \tilde{\eta}_{hyd}^n - \Delta t^n \frac{\partial }{\partial x}
\left( u^n \left(\tilde{\eta}_{hyd}^n + \frac{g}{2}(H^n)^2\right)\right) +
\frac{g}{2} (H^{n+1/2} - H^n)^2 \\
+ \frac{H^{n+1/2}}{2} (u^{n+1/2} -
u^n)^2,
\label{eq:error_sdt_1}
\end{multline}
with
$$\tilde{\eta}_{hyd}^n = \tilde{\eta}_{hyd}(X^n) = \frac{H^n}{2}(u^n)^2 +
\frac{g}{2}(H^n)^2 + gH^n z_b,$$
defined by~\eqref{eq:etatilde}. Likewise, multiplying~Eq.~\eqref{eq:semi_dis3} by $w^n$ leads to
\begin{multline}
\frac{H^{n+1/2}}{2}(w^{n+1/2})^2 = \frac{H^n}{2}(w^{n})^2 - \Delta t^n \frac{\partial }{\partial x}
\left( u^n \frac{H^n}{2}(w^n)^2\right) \\
+
\frac{H^{n+1/2}}{2} (w^{n+1/2} - w^n)^2.
\label{eq:error_sdt_2}
\end{multline}
Notice that the last term appearing in Eq.~\eqref{eq:error_sdt_1} and
in Eq.~\eqref{eq:error_sdt_2}
is non negative. These error terms are due to the explicit time scheme. The sum of the two previous equations gives the inequality
\begin{equation}
\tilde{\eta}^{n+1/2} \leq \tilde{\eta}^n -\Delta t^n \frac{\partial \tilde{G}^n}{\partial x} + (\Delta t^n)^2 {\cal O}\biggl(\left\|
F'(X^n)+S(X^n)\right\|^2_2\biggr).
\label{eq:energ_semidt1}
\end{equation}
Now we multiply~\eqref{eq:semi_dis4} by $(H{\bf u})^{n+1}$ and
after simple manipulations it comes
\begin{multline}
\frac{H^{n+1}}{2}(u^{n+1})^2=
\frac{H^{n+1/2}}{2}(u^{n+1/2})^2 - \Delta t^n \left(
  \frac{\partial}{\partial x} \bigl( (Hu)^{n+1}p_{nh}^{n+1} \bigr) \right.\\
\left. + p_{nh}^{n+1} \left(
    \frac{\partial}{\partial x} (Hu)^{n+1} - u^{n+1} \frac{\partial}{\partial x}(H^{n+1}+2z_b)\right)\right)-
\frac{H^{n+1/2}}{2} (u^{n+1} - u^{n+1/2})^2.
\label{eq:energ_semidt3}
\end{multline}
and
\begin{multline}
\frac{H^{n+1}}{2}(w^{n+1})^2=
\frac{H^{n+1/2}}{2}(w^{n+1/2})^2 + 2 \Delta t^n p_{nh}^{n+1}w^{n+1} -
\frac{H^{n+1/2}}{2} (w^{n+1} - w^{n+1/2})^2.
\label{eq:energ_semidt4}
\end{multline}
In Eqs.~\eqref{eq:energ_semidt3} and~\eqref{eq:energ_semidt4}, the error terms due to the time discretization are non-positive. Using the two previous equations and~\eqref{eq:euler_sw3_d} gives the inequality
\begin{equation}
\tilde{\eta}^{n+1} \leq \tilde{\eta}^{n+1/2} -\Delta t^n
\frac{\partial }{\partial x} \left(
(Hu)^{n+1}p_{nh}^{n+1}\right).
\label{eq:energ_semidt2}
\end{equation}
Finally Eq.~\eqref{eq:energ_semidt2} coupled with
Eq.~\eqref{eq:energ_semidt1} gives the result.
\end{proof}

\subsection{The semi-discrete (in space) scheme}
\label{subsec:semi_d_space}

To approximate the solution $X=(H,Hu,Hw)^T$ of the system~\eqref{eq:form_fin}, we use a finite volume framework.
We assume that the computational domain is discretized with $I$ nodes
$x_i$, $i=1,\ldots,I$.
  We denote $C_i$ the cell of length $\Delta
x_i=x_{i+1/2}-x_{i-1/2}$ with $x_{i+1/2}=(x_i+x_{i+1})/2$. We denote
$X_{i}=(H_i,q_{x,i},q_{z,i})^T$ with
$$X_{i} \approx \frac{1}{\Delta x_i} \int_{C_i} X(x,t) dx,$$
the approximate solution at time $t$ on
the cell $C_i$ with $q_{x,i}=H_i u_{i}$, $q_{z,i}=H_i w_{i}$. Likewise for the
topography, we define
$$z_{b,i} = \frac{1}{\Delta x_i} \int_{C_i} z_b(x) dx.$$
The non-hydrostatic part of the pressure is discretized on a staggered
grid (in fact the dual
mesh if we consider the 2d case)
$$p_{nh,i+1/2} \approx \frac{1}{\Delta x_{i+1/2}} \int_{x_i}^{x_{i+1}} p_{nh}(x,t) dx,$$
$\Delta x_{i+1/2} = x_{i+1} - x_i$.

Now we propose and study the semi-discrete (in space) scheme
approximating the model~\eqref{eq:form_fin} and the divergence free condition~\eqref{eq:euler_sw3}. The semi-discrete scheme writes
\begin{eqnarray}
&& \Delta x_i \frac{\partial X_i}{\partial t} + \left( F_{i+1/2-} -
  F_{i-1/2+}\right) + R_{nh,i} = 0,\label{eq:mom_d}\\
&& \divswd (\{{\bf u}_j\}) = 0,\label{eq:div_d}
\end{eqnarray}
where~\eqref{eq:div_d} is a discretized version of the divergence free
condition~\eqref{eq:euler_sw3} which we detail below and
with the numerical fluxes
\begin{align*}
F_{i+1/2+} & = {\cal F}(X_{i},X_{i+1},z_{b,i},z_{b,i+1}) + {\cal S}_{i+1/2+}\\
F_{i+1/2-} & = {\cal F}(X_{i},X_{i+1},z_{b,i},z_{b,i+1}) + {\cal S}_{i+1/2-}.
\end{align*}
${\cal F}$ is a numerical flux for the conservative part of the system,
${\cal S}$ is a convenient discretization of the topography source
term, see paragraph~\ref{subsec:fully_discrete}.

Since the first two lines
of~\eqref{eq:form_fin1} correspond to the classical Saint-Venant
system, the numerical fluxes
\begin{equation}
F_{i+1/2\pm} = \left(\begin{array}{c}
F_{H,i+1/2}\\
F_{q_x,i+1/2\pm}\\
F_{q_z,i+1/2}
\end{array}\right),
\label{eq:flux_hyp}
\end{equation}
can be constructed using any numerical solver
for the Saint-Venant system. More precisely for $F_{H,i+1/2}$,$F_{q_x,i+1/2\pm}$ we adopt numerical fluxes suitable for the
Saint-Venant system with topography. Notice that from the
definition~\eqref{eq:X}, since only the second
component of $S(X)$ is non zero, only $F_{q_x}$ has two interface values
under the form $F_{q_x,i+1/2\pm}$. For the definition of $F_{q_z,i+1/2}$, the formula (see~\cite{bristeau4})
\begin{equation}
F_{q_z,i+1/2} = F_{H,i+1/2}w_{i+1/2},
\label{eq:fluxHw}
\end{equation}
with
\begin{equation}
w_{i+1/2} = \left\{ \begin{array}{ll}
w_i & \mbox{ if } F_{H,i+1/2} \geq 0\\
w_{i+1} & \mbox{ if } F_{H,i+1/2} < 0
\end{array}\right.
\label{eq:def_upwind_w}
\end{equation}
can be used.

Combining the finite volume approach for the hyperbolic part with a
finite difference strategy for the parabolic part, the non-hydrostatic part $R_{nh,i}$ is defined by
$$R_{nh,i} = \begin{pmatrix} 0\\ \nablaswd p_{nh}\end{pmatrix},$$
where the two components of $\nablaswd p_{nh}$ are defined by
\begin{eqnarray}
 \Delta x_i \left. \nablaswd p_{nh}\right|_1 & = & 
H_{i}
  (p_{nh,i+1/2} - p_{nh,i-1/2}) \nonumber\\
& & + p_{nh,i+1/2} \bigl(\zeta_{i+1} - \zeta_{i}\bigr) + p_{nh,i-1/2} \bigl(\zeta_{i}  - \zeta_{i-1}\bigr),\label{eq:pnh_sd1}\\
 \Delta x_i \left. \nablaswd p_{nh}\right|_2 & = & 
-\Bigl( \Delta x_{i+1/2} p_{nh,i+1/2} + \Delta x_{i-1/2} p_{nh,i-1/2}\Bigr),\label{eq:pnh_sd2}
\end{eqnarray}
with
\begin{eqnarray*}
& & \zeta_{i} = \frac{H_{i}+2z_{b,i}}{2}.
\end{eqnarray*}
And in~\eqref{eq:div_d},  $\divswd ({\bf u})$ is defined by
\begin{multline}
\Delta x_{i+1/2} \divswd ({\bf u}) = (Hu)_{i+1} - (Hu)_{i} - (u_i + u_{i+1}) \bigl( \zeta_{i+1} -
  \zeta_{i}\bigr) \\
+ \Delta x_{i+1/2} \bigl(w_{i+1} + w_i\bigr).
\label{eq:div_ip12}
\end{multline}
Notice that in the definitions~\eqref{eq:pnh_sd1}-\eqref{eq:pnh_sd2} and in the sequel,
the quantity $p_{nh}$ means $\{p_{nh,j}\}$. Likewise in
Eq.~\eqref{eq:div_ip12} and in the sequel, ${\bf u}$ means $\{{\bf u}_j\}$.

In a first step, we assume we have for the resolution of the
hyperbolic part i.e. the calculus of $F_{i+1/2-}$,$F_{i-1/2+}$, a robust and efficient numerical scheme. Since this
step mainly consists in the resolution of the
Saint-Venant equations there exists several solvers endowed with such
properties (HLL, Rusanov, relaxation, kinetic,\ldots),
see~\cite{bouchut_book}.

We assume we have for the prediction step a numerical scheme which is
\begin{itemize}
\item[{\it (i)}] consistent with the Saint-Venant
  system~\eqref{eq:sv_1}-\eqref{eq:sv_2},
\item[{\it (ii)}] well-balanced i.e. at rest $X^{n+1/2}=X^n$ in~\eqref{eq:form_fin1},
\item[{\it (iii)}] satisfying an in-cell entropy of the form
$$\Delta x_i \frac{\partial \tilde{\eta}_{hyd,i}}{\partial t}  + \Bigl( \tilde{G}_{hyd,i+1/2}
- \tilde{G}_{hyd,i-1/2} \Bigr) \leq 0,$$
with
$$\tilde{\eta}_{hyd,i}=\frac{H_i}{2}(u_i)^2 +
\frac{g}{2}(H_i)^2 + gH_i z_b,$$
and $\tilde{G}_{hyd,i+1/2}$ is the entropy flux associated with the
chosen finite volume solver.
\end{itemize}
Then the following proposition holds.
\begin{proposition}
The numerical scheme~\eqref{eq:mom_d},\eqref{eq:div_d} 
\begin{itemize}
\item[{\it (i)}] is consistent with the
  model~\eqref{eq:euler_11}-\eqref{eq:wbar1},
\item[{\it (ii)}] preserves the same steady state as the lake at rest,
\item[{\it (iii)}] satisfies an in-cell entropy inequality associated
  with the entropy $\tilde{\eta}(t)$ analogous to the continuous one
  defined in~\eqref{eq:euler_66}
\begin{equation}
\Delta x_i \frac{\partial \tilde{\eta}_i}{\partial t}  + \Bigl( \hat{G}_{i+1/2}
- \hat{G}_{i-1/2} \Bigr) \leq d_i, \quad\mbox{in } C_i,
\label{eq:semi_dis_entro}
\end{equation}
with
\begin{eqnarray*}
\tilde{\eta}_{i} & = & \tilde{\eta}(X_{i}) = \tilde{\eta}_{hyd,i} +
H_{i} \frac{w_{i}^2}{2},\\ 
\hat{G}_{i+1/2} & = & \tilde{G}_{hyd,i+1/2} +
F_{H,i+1/2}w_{i+1/2}^2/2 + (Hu)_{i+1/2} p_{nh,i+1/2}.
\end{eqnarray*}
 and
$d_i$ is an error term satisfying $d_i = {\cal O} (\Delta x^3)$,
\item[{\it (iv)}] ensures a decrease of the total energy under the form
\begin{equation}
\frac{\partial}{\partial t} \sum_i \Delta x_i \tilde{\eta}_i  \leq 0.
\label{eq:semi_dis_entro1}
\end{equation}
\end{itemize}
\label{prop:entropy_semi_d}
\end{proposition}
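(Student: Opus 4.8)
The plan is to establish the four assertions in turn, reducing each to the assumed properties \emph{(i)}--\emph{(iii)} of the hydrostatic (Saint-Venant) solver together with the discrete counterpart of the duality identity \eqref{eq:div_grad}. For consistency \emph{(i)}, the hyperbolic part is consistent by the assumed consistency of $\cal F$ and $\cal S$, so it only remains to check that the finite-difference operators $\nablaswd$ and $\divswd$ of \eqref{eq:pnh_sd1}--\eqref{eq:pnh_sd2} and \eqref{eq:div_ip12} approximate $\nablasw$ and $\divsw$ from \eqref{eq:def_gradsw}--\eqref{eq:def_divsw}. I would divide \eqref{eq:pnh_sd1} by $\Delta x_i$ and Taylor-expand $p_{nh,i\pm1/2}$ and $\zeta_{i\pm1}$ about $x_i$: the first term reproduces $H\,\partial_x p_{nh}$, the telescoping $\zeta$-terms reproduce $p_{nh}\,\partial_x(H+2z_b)$, and \eqref{eq:pnh_sd2} divided by $\Delta x_i$ gives $-2p_{nh}$ on a regular mesh; the same expansion applied to \eqref{eq:div_ip12} recovers \eqref{eq:def_divsw}. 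Hence the full scheme \eqref{eq:mom_d}--\eqref{eq:div_d} is consistent with \eqref{eq:euler_11}--\eqref{eq:wbar1}.

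For the well-balancing \emph{(ii)}, I would substitute the lake at rest $u_i=w_i=0$, $H_i+z_{b,i}=\mathrm{const}$. Every term of \eqref{eq:div_ip12} then vanishes, so the constraint \eqref{eq:div_d} holds trivially; the vertical flux \eqref{eq:fluxHw} is zero; and the assumed well-balancing of the hydrostatic solver makes the Saint-Venant flux difference vanish. Consequently the elliptic problem \eqref{eq:elliptic} for $p_{nh}$ has a zero right-hand side, so by invertibility of $\deltasw$ one gets $p_{nh}\equiv0$, whence $R_{nh,i}=0$ and $\partial_t X_i=0$; the steady state is preserved.

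The core is the energy estimate yielding \emph{(iii)} and \emph{(iv)}, obtained by pairing the momentum update with the velocity. I would first invoke assumption \emph{(iii)} for the in-cell hydrostatic inequality with flux $\tilde G_{hyd,i+1/2}$. Multiplying the $Hw$-equation by $w_i$ and using the mass balance, the upwind choice \eqref{eq:def_upwind_w} produces the advective flux $F_{H,i+1/2}w_{i+1/2}^2/2$ plus a nonpositive numerical-diffusion remainder, exactly as in the time-discrete computation \eqref{eq:error_sdt_2}. The remaining contribution is the non-hydrostatic work $\nablaswd p_{nh}\cdot{\bf u}_i$, handled by the discrete analogue of \eqref{eq:div_grad}: grouping the terms of \eqref{eq:pnh_sd1}--\eqref{eq:pnh_sd2} by $p_{nh,i\pm1/2}$ and reindexing (Abel summation), one verifies
\[
\sum_i \Delta x_i\,\nablaswd p_{nh}\cdot{\bf u}_i = \bigl[(Hu)\,p_{nh}\bigr]_{\partial}-\sum_i \Delta x_{i+1/2}\,p_{nh,i+1/2}\,\divswd({\bf u}),
\]
which is precisely the adjointness for which \eqref{eq:pnh_sd1}--\eqref{eq:pnh_sd2} and \eqref{eq:div_ip12} were designed. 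Extracting the interface flux $(Hu)_{i+1/2}p_{nh,i+1/2}$ from the per-cell work and discarding the constraint contribution $\divswd({\bf u})=0$ leaves a conservative residual that I would estimate as $d_i={\cal O}(\Delta x^3)$; assembling the three flux pieces into $\hat G_{i+1/2}$ gives \eqref{eq:semi_dis_entro}.

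Finally, \emph{(iv)} follows by summing \eqref{eq:semi_dis_entro} over $i$: the fluxes $\hat G_{i\pm1/2}$ telescope to boundary terms, the hydrostatic and vertical-kinetic contributions are nonpositive, and the exact duality above with $\divswd({\bf u})=0$ makes the total non-hydrostatic work vanish, so $\partial_t\sum_i\Delta x_i\tilde\eta_i\le0$. The main obstacle is this discrete summation-by-parts identity for $\nablaswd$ and $\divswd$: it is what guarantees that the Lagrange-multiplier pressure does no net work and hence that the scheme is energy-stable. The delicate bookkeeping is separating its \emph{exact} global form (needed for \emph{(iv)}) from its \emph{localized} form carrying the ${\cal O}(\Delta x^3)$ defect (needed for the in-cell statement \emph{(iii)}).
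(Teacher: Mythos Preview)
Your plan is correct and follows essentially the same route as the paper: consistency by Taylor-expanding the discrete operators, well-balancing by checking the rest state, and the energy estimate by multiplying the momentum equations by $(u_i,w_i)$, splitting into the assumed hydrostatic inequality, the upwind vertical-kinetic contribution from \eqref{eq:fluxHw}--\eqref{eq:def_upwind_w}, and the non-hydrostatic work handled via the discrete duality between $\nablaswd$ and $\divswd$. The paper carries out exactly your per-cell bookkeeping, writing $\Delta x_i\,\nablaswd p_{nh}\cdot{\bf u}_i$ as the flux difference $(Hu)_{i+1/2}p_{nh,i+1/2}-(Hu)_{i-1/2}p_{nh,i-1/2}$ plus an explicit telescoping residual $d_{i+1/2}-d_{i-1/2}$ with $d_{i+1/2}=\tfrac{p_{nh,i+1/2}}{2}\bigl(\Delta x_{i+1/2}(w_{i+1}-w_i)-(u_{i+1}-u_i)(\zeta_{i+1}-\zeta_i)\bigr)$; this telescoping is what simultaneously gives the exact global identity for \emph{(iv)} and the ${\cal O}(\Delta x^3)$ local defect for \emph{(iii)}, exactly as you anticipated. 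One small remark: in \emph{(ii)} you appeal to the elliptic problem \eqref{eq:elliptic}, which belongs to the time-splitting scheme rather than the semi-discrete-in-space system \eqref{eq:mom_d}--\eqref{eq:div_d} at hand; here it suffices to observe directly that at rest the constraint \eqref{eq:div_d} is satisfied and $p_{nh}\equiv0$, $\partial_t X_i=0$ solves \eqref{eq:mom_d}.
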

The inequality~\eqref{eq:semi_dis_entro} is obtained by
multiplying (scalar product) the two momenta equations of~\eqref{eq:mom_d} by
${\bf u}_i$ which corresponds to a piecewise
constant discretization. For the hyperbolic part it allows to derive a semi-discrete entropy, see~\cite{bristeau1,JSM_entro}.

The error term $d_i$ in the r.h.s. of~\eqref{eq:semi_dis_entro} comes
from the discretization of the non-hydrostatic part corresponding to the incompressible part
of the model.
In order to eliminate $d_i$, a more
acurate discretization of the velocity field~-- in accordance with the
approximation of the divergence free condition~-- would be necessary.

\begin{proof}[Proof of prop.~\ref{prop:entropy_semi_d}]
{\it (i)} Since we have assumed that the numerical scheme for the
prediction part is consistent, we have
$${\cal F}(X,X,z,z) = F(X).$$
Likewise ${\cal S}$ is a consistent discretization of the topography
source term. It is easy to prove the non-hydrostatic terms given
by~\eqref{eq:pnh_sd1},\eqref{eq:pnh_sd2} is a consistent
discretization of $R_{nh}$ that proves the result.

{\it (ii)} When ${\bf u}_j=(0,0)^T$ for $j=i-1,i,i+1$, the hyperbolic
part being discretized using a well-balanced scheme we have
$$F_{i+1/2-}=F_{i-1/2+}=0,$$
and the scheme~\eqref{eq:mom_d},\eqref{eq:div_d}
reduces to
$$R_{nh,i}=(0,0,0)^T,\qquad \frac{\partial X_i}{\partial t} = 0,$$
ensuring the scheme is well-balanced.

{\it (iii)} Multiplying the first two equations of
system~\eqref{eq:mom_d} by the first two components of $\tilde{\eta}'(X_i)$ with
$$\tilde{\eta}'(X_i) = \begin{pmatrix}
gH_i - \frac{u_i^2+w_i^2}{2}\\
u_i\\
w_i
\end{pmatrix},$$
we obtain
\begin{equation}
\Delta x_i \frac{\partial \tilde{\eta}_{hyd,i}}{\partial t} + \left( \tilde{G}_{hyd,i+1/2-} -
  \tilde{G}_{hyd,i-1/2+}\right) +
u_i \left. \nablaswd p_{nh} \right|_1 \leq 0.
\label{eq:energ_d}
\end{equation}
In Eq.~\eqref{eq:energ_d}, the three first terms are obtained as
in~\cite{bristeau1}. The proof of theorem 2.1 in~\cite{bristeau1} can
be used without any change, except for the vertical kinetic energy,
namely
$$H\frac{w^2}{2},$$
that is not considered in~\cite{bristeau1} since the model is
hydrostatic. In order to obtain the contribution of the vertical
kinetic energy in Eq.~\eqref{eq:energ_d}, we proceed as follows.

Multiplying the third component of equation of~\eqref{eq:mom_d} by
$w_i$ i.e. the third component of $\tilde{\eta}'(X_i)$ we obtain
$$\Delta x_i \frac{\partial H_i w_i}{\partial t} w_i + \Bigl( F_{H,i+1/2}w_{i+1/2}
- F_{H,i-1/2}w_{i-1/2} \Bigr) w_i + \Delta x_i \left. \nablaswd
  p_{nh}\right|_2w_i = 0.$$
The first term in the preceding equation also writes
\begin{equation}
\frac{\partial H_i w_i}{\partial t} w_i = \frac{\partial }{\partial
  t} \left( \frac{H_i}{2} w_i^2 \right) +
\frac{w_i^2}{2}\frac{\partial H_i }{\partial t}.
\label{eq:energw_1}
\end{equation}
For the fluxes, it comes
\begin{multline}
\Bigl( F_{H,i+1/2}w_{i+1/2}
- F_{H,i-1/2}w_{i-1/2} \Bigr) w_i = F_{H,i+1/2}\frac{w_{i+1/2}^2}{2}
- F_{H,i-1/2} \frac{w_{i-1/2}^2}{2} \\+  F_{H,i+1/2}w_{i+1/2} \left( w_i - \frac{w_{i+1/2}}{2} \right) - F_{H,i-1/2}w_{i-1/2} \left( w_i - \frac{w_{i-1/2}}{2} \right).
\label{eq:energw_2}
\end{multline}
Using the first equation of~\eqref{eq:mom_d} and the definition~\eqref{eq:def_upwind_w}, the sum of Eqs.~\eqref{eq:energw_1}
and~\eqref{eq:energw_2} gives
\begin{multline*}
\Delta x_i \frac{\partial }{\partial
  t} \left( \frac{H_i}{2} w_i^2 \right) + F_{H,i+1/2}\frac{w_{i+1/2}^2}{2}
- F_{H,i-1/2}\frac{w_{i-1/2}^2}{2}  +  \Delta x_i \left. \nablaswd p_{nh}\right|_2w_i =\\
   \frac{1}{2}\left[F_{H,i+1/2}\right]_-
  (w_{i+1} - w_i)^2 - \frac{1}{2}\left[F_{H,i-1/2}\right]_+
  (w_{i} - w_{i-1})^2,
\end{multline*}
with the notations $[a]_+=max(a,0)$, $[a]_-=min(a,0)$ $a=[a]_+ +
[a]_-$. Therefore it comes
\begin{multline}
\Delta x_i \frac{\partial }{\partial
  t} \left( \frac{H_i}{2} w_i^2 \right) + F_{H,i+1/2}\frac{w_{i+1/2}^2}{2}
- F_{H,i-1/2}\frac{w_{i-1/2}^2}{2}  + \Delta x_i \left. \nablaswd
  p_{nh}\right|_2 w_i \leq 0,
\label{eq:energy_w}
\end{multline}
and the left hand side of the preceding equation is exactly the contribution of the vertical kinetic energy over the
energy balance~\eqref{eq:energ_d}.

Adding~\eqref{eq:energ_d} to~\eqref{eq:energy_w} gives
\begin{equation}
\Delta x_i \frac{\partial \tilde{\eta}_{i}}{\partial t} + \left( \tilde{G}_{i+1/2-} -
  \tilde{G}_{i-1/2+}\right) + \begin{pmatrix}
u_i\\
w_i
\end{pmatrix} . \nablaswd p_{nh} \leq 0,
\label{eq:energ_dd}
\end{equation}
with $\tilde{G}_{i+1/2-} = \tilde{G}_{hyd,i+1/2-} + F_{H,i+1/2} \frac{w_{i+1/2}^2}{2}$
and it remains to rewrite the last terms in
Eq.~\eqref{eq:energ_dd}. Using the definitions~\eqref{eq:pnh_sd1},\eqref{eq:pnh_sd2}, we have
\begin{eqnarray}
\Delta x_i \left. \nablaswd p_{nh}\right|_2 w_{i} & = & -\Bigl( \Delta x_{i+1/2}
p_{nh,i+1/2} + \Delta x_{i-1/2} p_{nh,i-1/2} \Bigr) w_{i},
\label{eq:entro_sd_p1}\\
\Delta x_i \left. \nablaswd p_{nh}\right|_1 u_{i} & = & 
H_{i}
  (p_{nh,i+1/2} - p_{nh,i-1/2}) u_{i} \nonumber\\
& & + p_{nh,i+1/2}\bigl(\zeta_{i+1} - \zeta_i\bigr) u_{i} + p_{nh,i-1/2}\bigl(\zeta_{i} - \zeta_{i-1}\bigr) u_{i}\nonumber\\
& = & (Hu) _{i+1/2} p_{nh,i+1/2} - (Hu) _{i-1/2} p_{nh,i-1/2}  \nonumber\\
& & + p_{nh,i+1/2}\bigl(\zeta_{i+1} - \zeta_{i}\bigr) u_{i} + p_{nh,i-1/2}\bigl(\zeta_{i} - \zeta_{i-1}\Bigr) u_{i}\nonumber\\
& & -((Hu)_{i+1} -(Hu)_{i})\frac{p_{nh,i+1/2}}{2} -((Hu)_{i} -(Hu)_{i-1})
\frac{p_{nh,i-1/2}}{2},
\label{eq:entro_sd_p2}
\end{eqnarray}
with
$$(Hu)_{i+1/2} = \frac{(Hu)_{i+1} + (Hu)_i}{2}.$$
The divergence free condition~\eqref{eq:div_d} multiplied by $1/2\, p_{nh,i+1/2}$ gives
\begin{multline}
\frac{p_{nh,i+1/2}}{2}\Bigl( (Hu)_{i+1} - (Hu)_{i}
\Bigr) 
 - \frac{u_i + u_{i+1}}{2}p_{nh,i+1/2} \left( \zeta_{i+1} -
  \zeta_{i}\right) \\
+ \Delta x_{i+1/2} \frac{p_{nh,i+1/2}}{2}  (w_{i+1}+w_i)  = 0.
\label{eq:entro_sd_p3}
\end{multline}
The sum of relations~\eqref{eq:entro_sd_p1}, \eqref{eq:entro_sd_p2}
and~\eqref{eq:entro_sd_p3} gives
\begin{eqnarray*}
\Delta x_i \nablaswd p_{nh}  . \begin{pmatrix}
u_i\\
w_i
\end{pmatrix} & = & \Bigl( (Hu)_{i+1/2} p_{nh,i+1/2} -
   (Hu)_{i-1/2} p_{nh,i-1/2}\Bigr) + d_{i+1/2} - d_{i-1/2},
\end{eqnarray*}
with
\begin{eqnarray*}
d_{i+1/2} & = & \frac{p_{nh,i+1/2}}{2} \Bigl( \Delta x_{i+1/2}
(w_{i+1} -w_i)  - (u_{i+1}-u_i)\bigl(\zeta_{i+1} - \zeta_{i}\bigr)\Bigr),\\
d_{i-1/2} & = & \frac{p_{nh,i-1/2}}{2} \Bigl( \Delta x_{i-1/2} (w_{i}
-w_{i-1}) - (u_i - u_{i-1})\bigl(\zeta_{i} - \zeta_{i-1}\bigr)\Bigl).
\end{eqnarray*}
When the disvergence free condition is satisfied at the boundaries, this proves {\it (iv)}.

Assuming the variables are smooth enough, the quantities $d_{i+1/2}$,$d_{i-1/2}$ satisfy $d_{i+1/2} - d_{i-1/2} = {\cal O}(\Delta x^3)$
and we have
$$\Delta x_i \begin{pmatrix}
u_i\\
w_i
\end{pmatrix} . \nablaswd p_{nh} = \Bigl( (Hu)_{i+1/2}p_{nh,i+1/2} -
  (Hu)_{i-1/2}p_{nh,i-1/2}\Bigr) + {\cal O}(\Delta x)^3,$$
that completes the proof.
\end{proof}

\subsection{The fully discrete scheme}
\label{subsec:fully_discrete}

Now we examine the fully discrete scheme that consists in the coupling
of the semi-discrete schemes described in paragraphs~\ref{subsec:gen_scheme} and~\ref{subsec:semi_d_space}.

\subsubsection{Prediction step}
\label{subsec:prediction}

Using the space discretization defined in paragraph~\ref{subsec:semi_d_space},
we adopt, for the system~\eqref{eq:form_fin1}, the discretization
\begin{equation}
	X^{n+1/2}_i=X_i^n-\sigma_i^n(F^n_{i+1/2-}-F^n_{i-1/2+}),
	\label{eq:upU}
\end{equation}
where $\sigma_i^n=\Delta t^n/\Delta x_i$ is the ratio between the space and
time steps and $F_{i+ 1/2\pm}$ are given by a robust and efficient
discretization of the hyperbolic part with the topography. 

For the discretization of the topography source term in the Saint-Venant
system, several techniques are available. In this paper,  we use the hydrostatic reconstruction (HR scheme for
short)~\cite{bristeau1}, leading to the following expression for the
numerical fluxes
\begin{equation}\begin{array}{l}
	\dsp F^n_{i+1/2-}=\begin{pmatrix} {\cal
          F}_H(X^n_{i+1/2-},X^n_{i+1/2+})\\ {\cal F}_{q_x}(X^n_{i+1/2-},X^n_{i+1/2+})\\ {\cal F}_{H}(X^n_{i+1/2-},X^n_{i+1/2+}) w_{i+1/2}\end{pmatrix}+\begin{pmatrix}0\\
          g\frac{(H_i^n)^2}{2}-\frac{g(H^n_{i+1/2-})^2}{2}\\ 0\end{pmatrix},\\
	\dsp F^n_{i+1/2+}=\begin{pmatrix} {\cal
          F}_H(X^n_{i+1/2-},X^n_{i+1/2+}) \\ {\cal F}_{q_x}(X^n_{i+1/2-},X^n_{i+1/2+})\\ {\cal
          F}_{H}(X^n_{i+1/2-},X^n_{i+1/2+}) w^n_{i+1/2}\end{pmatrix} +\begin{pmatrix}0\\
          g\frac{(H^n_{i+1})^2}{2}-\frac{g(H^n_{i+1/2+})^2}{2}\\ 0\end{pmatrix},
	\label{eq:HRflux}
	\end{array}
\end{equation}
where~\eqref{eq:fluxHw} has been used and ${\cal F}=({\cal F}_H,{\cal F}_{q_x})^T$ is a numerical flux for
the Saint-Venant system without topography. The reconstructed states
\begin{equation}
	X^n_{i+1/2-}=(H^n_{i+1/2-},H^n_{i+1/2-}u^n_i),\qquad
	X^n_{i+1/2+}=(H^n_{i+1/2+},H^n_{i+1/2+}u^n_{i+1}),
	\label{eq:lrstates}
\end{equation}
are defined by
\begin{equation}
	H^n_{i+1/2-}=(H^n_i+z_{b,i}-z_{b,i+1/2})_+,\qquad
	H^n_{i+1/2+}=(H^n_{i+1}+z_{b,i+1}-z_{b,i+1/2})_+,
	\label{eq:hlr}
\end{equation}
and
\begin{equation}
	z_{b,i+1/2}=\max(z_{b,i},z_{b,i+1}).
	\label{eq:zstar}
\end{equation}


\subsubsection{Correction step}
\label{subsec:correction}

For the system~\eqref{eq:form_fin2},\eqref{eq:euler_sw3_d}, we adopt the discretization
\begin{eqnarray}
H^{n+1}_i & = & H^{n+1/2}_i, \label{eq:h_cor}\\
{\bf u}_{i}^{n+1} & = & {\bf u}_{i}^{n+1/2} - \frac{\Delta
  t^n}{H_{i}^{n+1}} \nablaswd p_{nh}^{n+1} ,\label{eq:hu_cor}\\
\divswd \left( {\bf u}^{n+1} \right) & = & 0,\label{eq:div_free_d}
\end{eqnarray}
with $\nablaswd p_{nh}^{n+1}=\nablaswd (p_{nh}^{n+1};H^{n+1/2})$ and $\divswd \left( {\bf u}^{n+1}
\right)=\divswd \left( {\bf u}^{n+1};H^{ n+1/2}\right)$ defined by
Eqs.~\eqref{eq:pnh_sd1}-\eqref{eq:div_ip12}. Then, applying $\divswd$ 
to~\eqref{eq:hu_cor} and using~\eqref{eq:div_free_d} gives the expression for
the elliptic equation under the form
\begin{equation}
\divswd \left( \frac{1}{H^{n+1}}\nabla_{\!\! sw}\, p_{nh}^{n+1}
 \right) = \frac{1}{\Delta t^n} \divswd
\left( {\bf u}^{n+1/2} \right).
\label{eq:elliptic_d}
\end{equation}
The solution of~\eqref{eq:elliptic_d} gives $p_{nh}^{n+1}$
and allows to calculate $(H{\bf u})_{i}^{n+1}$ using~\eqref{eq:hu_cor}.

Omitting the superscript $^{n+1}$, the expression of
$$\Delta_{sw,i+1/2} p_{nh} = \divswd \left(\frac{1}{H}\nabla_{\!\! sw}\, p_{nh}\right),$$
is given by
\begin{equation*}
\begin{split}
-\Delta x_{i+1/2}\Delta_{sw,i+1/2} p_{nh} = & -\frac{H_{i+1}}{\Delta x_{i+1}}\Bigl(
  p_{nh,i+3/2} -  p_{nh,i+1/2} \Bigr) +
  \frac{H_{i}}{\Delta x_{i}}\Bigl( p_{nh,i+1/2} - p_{nh,i-1/2} \Bigr) \\
& - \frac{p_{nh,i+3/2}}{\Delta x_{i+1}} \bigl( \zeta_{i+2}  -\zeta_{i+1}\bigr)
- \frac{p_{nh,i+1/2}}{\Delta x_{i+1}} \bigl( \zeta_{i+1}  - \zeta_{i} \bigr)\\
& + \frac{p_{nh,i+1/2}}{\Delta x_{i}} \bigl( \zeta_{i+1} - \zeta_{i}\bigr)
+ \frac{p_{nh,i-1/2}}{\Delta x_{i}} \bigl( \zeta_{i} - \zeta_{i-1} \bigr)\\
& + \left( \frac{p_{nh,i+3/2} - p_{nh,i+1/2}}{\Delta x_{i+1}} +
\frac{p_{nh,i+1/2} - p_{nh,i-1/2}}{\Delta x_{i}}\right)\bigl(
\zeta_{i+1} - \zeta_{i}\bigr)\nonumber\\
& + \frac{p_{nh,i+3/2}}{H_{i+1}\Delta x_{i+1}} \bigl(\zeta_{i+2} -
  \zeta_{i+1} \bigr) \bigl(\zeta_{i+1} - \zeta_{i} \bigr)\nonumber\\
& + \frac{p_{nh,i+1/2}}{H_{i+1}\Delta x_{i+1}} \bigl(\zeta_{i+1} -
  \zeta_{i}\bigr)^2
+ \frac{p_{nh,i+1/2}}{H_{i}\Delta x_{i}} \bigl(\zeta_{i+1} -
  \zeta_{i}\bigr)^2\nonumber\\
& + \frac{p_{nh,i-1/2}}{H_{i}\Delta x_{i}} \bigl(\zeta_{i+1} -
  \zeta_{i} \bigr) \bigl(\zeta_{i} -
  \zeta_{i-1}\bigr)\nonumber\\
& +\Delta x_{i+1/2}\biggl( \frac{ \Delta x_{i+3/2} p_{nh,i+3/2} +
 \Delta x_{i+1/2} p_{nh,i+1/2}}{\Delta x_{i+1} H_{i+1}} \nonumber\\
& + \frac{
 \Delta x_{i+1/2} p_{nh,i+1/2}  +  \Delta x_{i-1/2}
 p_{nh,i-1/2}}{\Delta x_iH_i}\biggr).
\end{split}
\end{equation*}
And it remains to prove the previous relation is consistent with the
left hand side of Eq.~\eqref{eq:sturm_liouville2}. We rewrite
$\Delta_{sw,i+1/2} p_{nh}$ under the form
\begin{equation*}
\begin{split}
-\Delta x_{i+1/2}\Delta_{sw,i+1/2} p_{nh} = & -\frac{H_{i+1}}{\Delta x_{i+1}}\Bigl(
  p_{nh,i+3/2} -  p_{nh,i+1/2} \Bigr) +
  \frac{H_{i}}{\Delta x_{i}}\Bigl( p_{nh,i+1/2} - p_{nh,i-1/2} \Bigr) \\
& - \frac{p_{nh,i+3/2}}{\Delta x_{i+1}} \Bigl(
\zeta_{i+2} - 2\zeta_{i+1} + \zeta_{i}\Bigr)\\
& - \left( \frac{1}{\Delta x_{i+1}} - \frac{1}{\Delta x_{i}}\right) p_{nh,i+1/2}\Bigl(
\zeta_{i+1} -\zeta_{i}\Bigr) \\
& - \frac{p_{nh,i-1/2}}{\Delta x_{i}} \Bigl(
  \zeta_{i+1} -2\zeta_{i} + \zeta_{i-1}\Bigr)\\
& + \frac{p_{nh,i+3/2}}{H_{i+1}\Delta x_{i+1}} \Bigl(\zeta_{i+2} -
  \zeta_{i+1}\Bigr) \Bigl(\zeta_{i+1} -
  \zeta_{i}\Bigr)\nonumber\\
& + p_{nh,i+1/2} \left( \frac{1}{H_{i+1}\Delta x_{i+1}} + \frac{1}{H_{i}\Delta x_{i}} \right)\Bigl(\zeta_{i+1} -
  \zeta_{i}\Bigr)^2\nonumber\\
& + \frac{p_{nh,i-1/2}}{H_{i}\Delta x_{i}} \Bigl(\zeta_{i+1} -
  \zeta_{i}\Bigr) \Bigl(\zeta_{i} -
  \zeta_{i-1}\Bigr)\nonumber\\
& + \Delta x_{i+1/2} \biggl(
\frac{\Delta x_{i+3/2} p_{nh,i+3/2} +
\Delta x_{i+1/2} p_{nh,i+1/2}}{\Delta x_{i+1}H_{i+1}} \nonumber\\
& + \frac{\Delta x_{i+1/2} p_{nh,i+1/2}  + \Delta x_{i-1/2}
  p_{nh,i-1/2}}{\Delta x_iH_i}\biggr),
\end{split}
\end{equation*}
that is indeed a consistent discretization of the
left hand side of Eq.~\eqref{eq:sturm_liouville2}.

In the case of a regular mesh $\Delta x_i = \Delta x = cst$, the
preceding expression of $\Delta_{sw,i+1/2} p_{nh}$ reduces to
\begin{equation*}
\begin{split}
-\Delta x^2 \Delta_{sw,i+1/2} p_{nh} =& -H_{i+1}\Bigl(
  p_{nh,i+3/2} -  p_{nh,i+1/2} \Bigr) +
  H_{i}\Bigl( p_{nh,i+1/2} - p_{nh,i-1/2} \Bigr) \\
& - p_{nh,i+3/2} \Bigl(
\zeta_{i+2} - 2\zeta_{i+1} + \zeta_{i}\Bigr)\\
& - p_{nh,i-1/2} \Bigl(
  \zeta_{i+1} -2\zeta_{i} + \zeta_{i-1}\Bigr)\\
& + \frac{p_{nh,i+3/2}}{H_{i+1}} \Bigl(\zeta_{i+2} -
  \zeta_{i+1}\Bigr) \Bigl(\zeta_{i+1} -
  \zeta_{i}\Bigr)\nonumber\\
& + p_{nh,i+1/2} \left( \frac{1}{H_{i}} + \frac{1}{H_{i+1}}\right) \Bigl(\zeta_{i+1} -
  \zeta_{i}\Bigr)^2\nonumber\\
& + \frac{p_{nh,i-1/2}}{H_{i}} \Bigl(\zeta_{i+1} -
  \zeta_{i}\Bigr) \Bigl(\zeta_{i} -
  \zeta_{i-1}\Bigr)\nonumber\\
& + \Delta x^2
\left( \frac{p_{nh,i+3/2} +
p_{nh,i+1/2}}{H_{i+1}} + \frac{p_{nh,i+1/2}  + p_{nh,i-1/2}}{H_i}\right).
\end{split}
\end{equation*}

\begin{remark}
The numerical scheme proposed in this paragraph for the correction step is based on a
finite difference strategy and hence cannot be extended to
unstructured meshes in higher dimension. But, based on a variational
formulation of the correction step, the authors have obtained
a finite element version of the
scheme~\eqref{eq:hu_cor}-\eqref{eq:div_free_d} with polynomial
approximations of the
velocities and the pressure satisfying the discrete $inf-sup$
condition. It is not in the scope of this paper to present and
evaluate such a discretization strategy, it is available in a
companion paper~\cite{nora2}.
\end{remark}

\subsubsection{Boundary conditions}
\label{subsec:BC}

It is difficult to define the boundary conditions for the whole
system. Therefore, we first impose boundary conditions for the
hyperbolic part of the system and then we apply suitable boundary
conditions for the elliptic equation governing the non-hydrostatic
pressure $p_{nh}$.

\paragraph{Hyperbolic part}

The definition and the implementation of the boundary conditions used for
the hyperbolic part have been presented in various papers of some
of the authors. The reader can refer to~\cite{coussin}.

\paragraph{Non-hydrostatic part}

For the non-hydrostatic part, the definition of the boundary
conditions means to find boundary conditions for Eq.~\eqref{eq:elliptic_d} and
we adopt the following strategy. Notice that other solutions can be
investigated since the coupling of the boundary conditions between a
hyperbolic step and a parabolic step is far from being obvious.

\noindent\underline{\it Given flux} When the inflow is prescribed~--
for the hyperbolic part~-- we impose for the elliptic equation~\eqref{eq:elliptic_d} a homogeneous Dirichlet type
boundary condition. More precisely, if for $i=0$ or $i=I-1$, $\left. Hu\right|_{i+1/2}^{n+1/2}=Q_0$
is given then we impose $p_{nh,i+1/2}^{n+1}=0$. This choice is imposed by
the relation~\eqref{eq:hu_cor} in order to ensure $(Hu)^{n+1} =
(Hu)^{n+1/2}$ on the neighbouring cell.

\noindent\underline{\it Given water depth} If the water depth is
prescribed for the hyperbolic part i.e. for $i=0$ or $i=I$, $\left. H\right|_{i+1/2}^{n+1/2}=H_0$
is given then we impose for Eq.~\eqref{eq:elliptic_d} a Neumann type boundary condition under the
form $p_{nh,1/2}=p_{nh,3/2}=0$ or $p_{nh,I-1/2}=p_{nh,I+1/2}=0$.



\subsection{The discrete $inf-sup$ condition}

Using a matrix notation for the shallow water gradient operator
$$\nablasw p_{nh}^{n+1} = B^T p_{nh}^{n+1},$$
we get
$$\divsw {\bf u}^{n+1}  = B {\bf u}^{n+1},$$
where suitable boundary conditions are assumed. Therefore, defining
$$\Lambda = \mbox{diag} (H_i^{-1}),$$
 the fully
discrete scheme obtained from~\eqref{eq:upU},\eqref{eq:h_cor}-\eqref{eq:div_free_d} can
be rewritten under the form
\begin{equation}
\begin{pmatrix}
\frac{1}{\Delta t} & 0 & 0\\
0 & \frac{1}{\Delta t} & B^T\\
0 & B \Lambda & 0
\end{pmatrix}
\begin{pmatrix}
H^{n+1}\\
(H{\bf u})^{n+1}\\
p_{nh}^{n+1}
\end{pmatrix} = \begin{pmatrix}
\frac{H^n}{\Delta t} + D_H(X^n)\\
\frac{(H{\bf u})^n}{\Delta t} + D_{H\underline{u}}(X^n)\\
0
\end{pmatrix},
\label{eq:inf_sup}
\end{equation}
where $D_H$,$D_{H\underline{u}}$ refer to the numerical discretization
of the hyperbolic part. The fact that the matrix $B\Lambda B^T$ is invertible is related to
the inf-sup condition~\cite{lbb}, see~\cite{nora2} where this property is
investigated in details.

\begin{remark}
Instead of
the scheme~\eqref{eq:inf_sup},
a fully implicit version~-- including the hyperbolic part~-- may be considered. But such a discretization would imply to have an
implicit treatment of the hyperbolic part of the proposed model
corresponding to the Saint-Venant system. And an efficient and robust
implicit solver for the Saint-Venant system is hardly accessible.
\end{remark}

\subsection{Stability of the scheme}

For the numerical scheme detailed in paragraphs~\ref{subsec:prediction}
and~\ref{subsec:correction} we have the following
proprosition.

\begin{proposition}
The scheme~\eqref{eq:upU},\eqref{eq:h_cor}-\eqref{eq:div_free_d}
\begin{itemize}
\item[{\it (i)}] preserves the nonnegativity of the water depth
  $H_i^{n+1}\geq 0$, $\forall i$, $\forall n$,
\item[{\it (ii)}] preserves the steady state of the lake at rest,
\item[{\it (iii)}] is consistent with the
  model~\eqref{eq:euler_11}-\eqref{eq:wbar1}.
\end{itemize}
\label{prop:scheme_prop}
\end{proposition}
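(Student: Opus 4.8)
The plan is to prove the three properties in the order stated, leveraging the fact that the fully discrete scheme is the natural coupling of the hydrostatic-reconstruction prediction step \eqref{eq:upU}--\eqref{eq:HRflux} with the correction step \eqref{eq:h_cor}--\eqref{eq:div_free_d}, and that the corresponding semi-discrete statements have already been established in Proposition \ref{prop:entropy_semi_d}. Throughout, the key structural observation is that the correction step modifies only the velocities through \eqref{eq:hu_cor} and leaves the water depth untouched by \eqref{eq:h_cor}, so all questions about $H$ are decided entirely by the prediction step.

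For \emph{(i)}, positivity of the water depth, I would argue as follows. Since $H^{n+1}_i = H^{n+1/2}_i$ by \eqref{eq:h_cor}, it suffices to show $H^{n+1/2}_i \geq 0$. The update for $H$ in \eqref{eq:upU} is exactly the hydrostatic-reconstruction Saint-Venant mass update, whose numerical flux ${\cal F}_H$ is evaluated on the reconstructed states \eqref{eq:hlr}. Those reconstructed heights are nonnegative by construction (the positive-part cutoff $(\cdot)_+$ in \eqref{eq:hlr}), and the underlying Saint-Venant solver ${\cal F}=({\cal F}_H,{\cal F}_{q_x})^T$ is assumed to be one of the standard robust solvers (kinetic, HLL, relaxation) that preserve nonnegativity of $H$ under an appropriate CFL condition. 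Invoking this positivity-preservation property of the HR scheme \cite{bristeau1} under the CFL restriction on $\Delta t^n$ yields $H^{n+1/2}_i \geq 0$, hence $H^{n+1}_i \geq 0$.

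For \emph{(ii)}, preservation of the lake at rest, and \emph{(iii)}, consistency, I would reduce both to the already-proved semi-discrete Proposition \ref{prop:entropy_semi_d}. For the lake at rest, I start from data with $\mathbf{u}_j = (0,0)^T$ and a flat free surface $H_j + z_{b,j} = \text{const}$. The well-balancing of the HR prediction step then gives $F^n_{i+1/2-} = F^n_{i-1/2+}$, so $X^{n+1/2}_i = X^n_i$; in particular the velocities remain zero and the right-hand side of the elliptic equation \eqref{eq:elliptic_d}, namely $\divswd(\mathbf{u}^{n+1/2})$, vanishes. With the boundary conditions of paragraph \ref{subsec:BC}, the only solution is $p_{nh}^{n+1} \equiv 0$, whence $\nablaswd p_{nh}^{n+1} = 0$ and the correction \eqref{eq:hu_cor} leaves $\mathbf{u}^{n+1} = \mathbf{u}^{n+1/2} = 0$; thus $X^{n+1} = X^n$. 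For consistency, I would note that the prediction step is consistent with the Saint-Venant system plus the $Hw$-transport equation (this is part {\it (i)} of Proposition \ref{prop:entropy_semi_d}), and that the discrete operators $\nablaswd$, $\divswd$, $\Delta_{sw,i+1/2}$ were each verified to be consistent discretizations of $\nablasw$, $\divsw$, $\deltasw$ in paragraph \ref{subsec:fully_discrete}; combining these, a standard Taylor-expansion argument shows the coupled scheme is consistent with \eqref{eq:euler_11}--\eqref{eq:wbar1}.

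I expect the main obstacle to be part \emph{(i)}: while positivity of the HR Saint-Venant mass update is classical, one must be careful that the \emph{correction} step does not spoil it. This is precisely where \eqref{eq:h_cor} is essential --- the correction acts only on the momenta via the Lagrange multiplier $p_{nh}^{n+1}$ and never alters $H$ --- so the positivity established at the half-step is inherited without modification. The subtlety to flag is that the CFL condition guaranteeing positivity is that of the underlying Saint-Venant solver alone, independent of the non-hydrostatic correction; this decoupling is exactly the robustness advantage of the projection--correction strategy and should be stated explicitly.
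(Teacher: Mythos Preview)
Your proposal is correct and follows essentially the same route as the paper: both exploit that $H^{n+1}=H^{n+1/2}$ so positivity is inherited from the HR prediction step, both use well-balancing of HR to get $X^{n+1/2}=X^n$ and then trivialize the correction, and both combine consistency of the HR flux with consistency of the discrete operators $\nablaswd$, $\divswd$. The only noticeable difference is that for {\it (ii)} you spell out explicitly why $p_{nh}^{n+1}\equiv 0$ via the vanishing right-hand side of \eqref{eq:elliptic_d}, whereas the paper simply asserts that the correction step \eqref{eq:h_cor}--\eqref{eq:elliptic_d} gives $X_i^{n+1}=X_i^{n+1/2}$; your version is slightly more informative but the argument is the same.
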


\begin{proof}[Proof of prop.~\ref{prop:scheme_prop}]
{\it (i)} The statement that ${\cal F}$ preserves the nonnegativity of
the water depth means exactly that
$${\cal F}_H(H_i = 0, u_i, H_{i+1}, u_{i+1}) - {\cal F}_H(H_{i-1},
u_{i-1}, H_i = 0, u_i) \leq 0,$$
for all choices of the other arguments. From~\eqref{eq:upU},\eqref{eq:HRflux}, we need to check that
$${\cal F}_H(X_{i+1/2−}^n,X_{i+1/2+}^n) - {\cal
  F}_H(X_{i-1/2−}^n,X_{i-1/2+}^n) \leq 0,$$
whenever $H_i^n=0$. And this property holds since
from~\eqref{eq:hlr},\eqref{eq:zstar}
$H_i=0$ implies $H_{i+1/2-}=H_{i-1/2+}=0$.

{\it (ii)} When $u_i^n=0$ for all $i$, the properties of the hydrostatic reconstruction technique ensures
$$F^n_{i+1/2-} = F^n_{i-1/2+},$$
in~\eqref{eq:upU} and hence $X_i^{n+1/2} = X_i^{n}$ moreover the
scheme~\eqref{eq:h_cor},\eqref{eq:hu_cor},\eqref{eq:elliptic_d} gives
$$X_i^{n+1} = X_i^{n+1/2},$$
proving the scheme is well-balanced.

{\it (iii)} The numerical flux ${\cal F}$ being consistent with the
homogeneous Saint-Venant system, the hydrostatic reconstruction
associated with ${\cal F}$ gives a consistent discretization of the
Saint-Venant system with the topography source term. The
discretizations~\eqref{eq:hu_cor},\eqref{eq:div_free_d} being
obviously consistent with the remaining part, this
proves the result.
\end{proof}

\subsection{$H\rightarrow 0$}
\label{subsec:H0}

When $H$ tends to 0, the correction step~\eqref{eq:hu_cor} is no longer
valid and we propose a modified version
of~\eqref{eq:hu_cor},\eqref{eq:div_free_d} under the form
\begin{eqnarray*}
{\bf u}_{i}^{n+1} & = & {\bf u}_{i}^{n+1/2} - \Delta t^n\frac{1}{H_{i}^{n+1}} \nablaswdepsi p_{nh}^{n+1} ,\label{eq:hu_cor_mod}\\
\divswd \left( {\bf u}^{n+1}\right) & = & 0, \label{eq:div_free_d_mod}
\end{eqnarray*}
and
\begin{eqnarray*}
\frac{\Delta x_i}{H_i^{n+1}}\left. \nablaswdepsi
    p_{nh}^{n+1} \right|_1 & = & 
    p_{nh,i+1/2}^{n+1} -
    p_{nh,i-1/2}^{n+1}\\
& & + \frac{\1_{H_i^{n+1} \geq \varepsilon}}{H_{i,\varepsilon}^{n+1}}
\biggl( p^{n+1}_{nh,i+1/2}
\Bigl(\zeta^{n+1}_{i+1} - \zeta^{n+1}_{i}  \Bigr)+ p^{n+1}_{nh,i-1/2}
\Bigl(\zeta^{n+1}_{i} - \zeta^{n+1}_{i-1} \Bigr)\biggr),\\
\frac{\Delta x_i}{H_i^{n+1}} \left. \nablaswdepsi p_{nh}^{n+1} \right|_2 & = & -\frac{1}{H_{i,\varepsilon}^{n+1}}\left(\Delta x_{i+1/2}
p^{n+1}_{nh,i+1/2} + \Delta x_{i-1/2} p^{n+1}_{nh,i-1/2}\right) ,
\end{eqnarray*}
with $\varepsilon$ being a constant $\varepsilon=cst >0$ and
$H_\varepsilon = \max(H,\varepsilon)$.

In order to ensure the total pressure
$$\frac{g}{2}H + \overline{p}_{nh},$$
remains non negative, we add the constraint
$$p_{nh,i+1/2}^{n+1}=0 \quad{\rm when}\quad
\frac{g}{2}\min(H_i^{n+1},H_{i+1}^{n+1}) + p_{nh,i+1/2}^{n+1} \leq 0,$$
to the solution of the elliptic equation~\eqref{eq:elliptic_d}.
Notice that in all the numerical tests presented in this paper this
constraint is not active meaning the total pressure remains non negative.

\section{Fully discrete entropy inequality}
\label{sec:entropy}

We have precised in~paragraph~\ref{subsec:fully_discrete} a general scheme for the
resolution of the non-hydrostatic model. In this paragraph we study
the properties of the proposed scheme in the context of one particular
solver for the hyperbolic part, namely the kinetic solver, since it allows to ensure
stability properties among which are entropy inequalities
(semi-discrete and fully discrete)~\cite{JSM_entro}.

Looking for a kinetic interpretation of the HR scheme,
we would like to write down a kinetic scheme for Eq.~\eqref{eq:gibbs}
such that the associated macroscopic scheme is exactly \eqref{eq:upU}-\eqref{eq:HRflux} with
the definitions~\eqref{eq:lrstates}-\eqref{eq:zstar}.

We drop the superscript $^n$ and keep superscripts $n+1$ and $n+1/2$. We denote $M_i=M(H_i,u_i,\xi)$, $M_{i+1/2-}=M(H_{i+1/2-},
u_i,\xi)$, $M_{i+1/2+}=M(H_{i+1/2+}, u_{i+1},\xi)$, $f_i^{n+1/2-}=f_i^{n+1/2-}(\xi)$,
and we consider the scheme
\begin{equation}\begin{array}{l}
	\dsp f_i^{n+1/2-}=M_i-\sigma_i\biggl(\xi\1_{\xi<0}M_{i+1/2+}+\xi\1_{\xi>0}M_{i+1/2-}+\delta M_{i+1/2-}\\
	\dsp\mkern 160mu -\xi\1_{\xi>0}M_{i-1/2-}-\xi\1_{\xi<0}M_{i-1/2+}-\delta M_{i-1/2+}\biggr).
	\label{eq:kinups}
	\end{array}
\end{equation}
In this formula, $\delta M_{i+1/2\pm}$ are defined by
$$\delta M_{i+1/2-} = (\xi - u_i) (M_i - M_{i+1/2-}),\qquad \delta
M_{i+1/2+} = (\xi - u_{i+1}) (M_{i+1} - M_{i+1/2+}),$$ 
and are assumed to satisfy the moment relations
\begin{equation}
	\int_\R\delta M_{i+1/2-}\,d\xi=0,\quad
	\int_\R \xi\,\delta M_{i+1/2-}\,d\xi=g\frac{H_i^2}{2}-g\frac{H_{i+1/2-}^2}{2},
	\label{eq:intdeltaM-}
\end{equation}
\begin{equation}
	\int_\R\delta M_{i-1/2+}\,d\xi=0,\quad
	\int_\R\xi\,\delta M_{i-1/2+}\,d\xi=g\frac{H_i^2}{2}-g\frac{H_{i-1/2+}^2}{2}.
	\label{eq:intdeltaM+}
\end{equation}
Defining the update as
\begin{equation}
	\begin{pmatrix} H\\Hu\end{pmatrix}^{n+1/2}_i=\frac{1}{\Delta x_i}\int_{x_{i-1/2}}^{x_{i+1/2}}\int_\R \kxi f(t^{n+1/2-},x,\xi)\,dxd\xi,
	\label{eq:updatekin}
\end{equation}
and
\begin{equation}
	f^{n+1/2-}_i(\xi)=\frac{1}{\Delta x_i}\int_{x_{i-1/2}}^{x_{i+1/2}}f(t^{n+1/2-},x,\xi)\,dx,
	\label{eq:f_in+1-}
\end{equation} 
the formula \eqref{eq:updatekin} can then be written
\begin{equation}
	\begin{pmatrix} H\\Hu\end{pmatrix}^{n+1/2}_i=\int_\R \kxi f^{n+1/2-}_i(\xi)\,d\xi,
	\label{eq:Un+1-i}
\end{equation}
and using~\eqref{eq:fluxHw} we also define
\begin{equation}
(H_i w_i)^{n+1/2} = H_i w_i - \sigma_i \Bigl( w_{i+1/2}F^{kin}_{H,i+1/2} 
- w_{i-1/2}F^{kin}_{H,i-1/2} \Bigr),
\label{eq:hwn+1-i}
\end{equation}
with
$$F^{kin}_{H,i+1/2} = \int_{\R} \xi \bigl(
  \1_{\xi>0}M_{i+1/2-}+\1_{\xi<0}M_{i+1/2+} \bigr) d\xi.$$
Finally, relations~\eqref{eq:kinups}-\eqref{eq:hwn+1-i} give an explicit formula for the
prediction step~\eqref{eq:upU} and we have the following proposition
that is proved in~\cite[Corollary~3.8]{JSM_entro} (the two constants
$v_m$ and $C_\beta$ are precised therein).

\begin{proposition}
Let $v_m$ and $C_\beta$ be two constants. For $\sigma_i$ small enough, the numerical scheme~\eqref{eq:upU} based on the kinetic
description~\eqref{eq:kinups}-\eqref{eq:hwn+1-i} and the HR
technique~\eqref{eq:hlr},\eqref{eq:zstar} satisfies the fully discrete
entropy inequality
\begin{equation}\begin{array}{l}
	\dsp \tilde{\eta}(X_i^{n+1/2}) \leq \tilde{\eta}(X_i)
	-\sigma_i\Bigl(\widetilde G_{i+1/2}-\widetilde G_{i-1/2}\Bigr)\\
	\dsp\hphantom{\eta(U_i^{n+1})+gz_{b,i}H_i^{n+1}\leq}
	+C_\beta (\sigma_iv_m)^2\biggl(g(z_{b,i+1}-z_{b,i})^2+g(z_{b,i}-z_{b,i-1})^2 \biggr),
	\label{eq:estentrint}
	\end{array}
\end{equation}
with $\widetilde G_{i+1/2} = \widetilde G_{hyd,i+1/2} + F_{H,i+1/2} \frac{w_{i+1/2}^2}{2}$ and
\begin{equation}
	\widetilde G_{hyd,i+1/2}=\int_{\xi<0}\xi H(M_{i+1/2+},z_{b,i+1/2})\,d\xi+\int_{\xi>0}\xi H(M_{i+1/2-},z_{b,i+1/2})\,d\xi.
	\label{eq:kingtilde}
\end{equation}
\label{prop:entropy_predic}
\end{proposition}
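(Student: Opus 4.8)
The plan is to split the entropy as $\tilde{\eta}(X_i^{n+1/2})=\tilde{\eta}_{hyd}(X_i^{n+1/2})+H_i^{n+1/2}(w_i^{n+1/2})^2/2$ and to treat the hydrostatic part and the vertical kinetic energy separately. For the hydrostatic part I would work at the kinetic level with the entropy $H_K$ of \eqref{eq:kinH} and the two identities \eqref{eq:kinint},\eqref{eq:kinflint}. The key tool is the kinetic entropy minimisation principle: among all nonnegative densities $f$ with prescribed moments $(H,Hu)$, the Maxwellian $M(H,u,\cdot)$ minimises $\int_\R H_K(f,\xi,z_{b,i})\,d\xi$, the topography term $g z_{b,i} f$ being linear and moment-only and hence not affecting the minimiser. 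Applied to the updated macroscopic state, this yields $\tilde{\eta}_{hyd}(X_i^{n+1/2})\le \int_\R H_K\bigl(f_i^{n+1/2-},\xi,z_{b,i}\bigr)\,d\xi$, reducing the estimate to a bound on the microscopic kinetic entropy of the update \eqref{eq:kinups}.

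To bound that microscopic entropy I would use the convexity of $f\mapsto H_K(f,\xi,z)$ (driven by the cubic term $g^2\pi^2 f^3/6$) together with the fact that, under a CFL restriction on $\sigma_i$, the update \eqref{eq:kinups} writes $f_i^{n+1/2-}$ as a convex combination of the central Maxwellian $M_i$ and the reconstructed neighbouring Maxwellians, up to the reconstruction corrections $\delta M_{i\pm1/2\mp}$. Jensen's inequality then bounds the microscopic entropy by the associated combination of kinetic entropies; taking moments, the inter-cell pieces telescope into the discrete entropy flux $\widetilde G_{hyd,i\pm1/2}$ of \eqref{eq:kingtilde}, while the gap between the cell states and their hydrostatic reconstructions \eqref{eq:hlr},\eqref{eq:zstar} produces a remainder. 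Showing that this remainder is controlled by the quadratic-in-jump term $C_\beta(\sigma_i v_m)^2\bigl(g(z_{b,i+1}-z_{b,i})^2+g(z_{b,i}-z_{b,i-1})^2\bigr)$ is precisely the content of \cite[Corollary~3.8]{JSM_entro}, which I would invoke. This is where the main difficulty sits: the sharp quadratic bound hinges on delicate estimates for the moments of $\delta M_{i\pm1/2\mp}$ and on the choice $z_{b,i+1/2}=\max(z_{b,i},z_{b,i+1})$.

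It remains to incorporate the vertical kinetic energy, the ingredient not covered by \cite{JSM_entro}. Since the prediction step advances $H_iw_i$ by the pure transport \eqref{eq:hwn+1-i} with the upwind choice \eqref{eq:def_upwind_w} for $w_{i+1/2}$ and with the same mass flux $F_{H,i+1/2}$ used in the height update, I would run the discrete-in-time analogue of the manipulation already performed at the semi-discrete level in \eqref{eq:energw_1}--\eqref{eq:energy_w}: multiply the $H_iw_i$-update by $w_i$, substitute the discrete mass balance, and complete the squares. The upwind definition of $w_{i+1/2}$ makes the resulting error terms $\tfrac12[F_{H,i+1/2}]_-(w_{i+1}-w_i)^2-\tfrac12[F_{H,i-1/2}]_+(w_i-w_{i-1})^2$ nonpositive, giving $H_i^{n+1/2}(w_i^{n+1/2})^2/2\le H_iw_i^2/2-\sigma_i\bigl(F_{H,i+1/2}w_{i+1/2}^2/2-F_{H,i-1/2}w_{i-1/2}^2/2\bigr)$ with no extra topography contribution, the $H_iw_i$ transport being insensitive to the reconstruction. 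Summing this with the hydrostatic inequality, the two flux terms combine into $\widetilde G_{i+1/2}=\widetilde G_{hyd,i+1/2}+F_{H,i+1/2}w_{i+1/2}^2/2$ and the topography remainder is inherited solely from the hydrostatic part, which yields \eqref{eq:estentrint} for $\sigma_i$ small enough.
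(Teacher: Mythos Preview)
Your approach is essentially the paper's: split $\tilde\eta$ into the hydrostatic entropy and the vertical kinetic energy, import the hydrostatic estimate \eqref{eq:estentrint_hyd} from \cite[Corollary~3.8]{JSM_entro}, and run the upwind energy manipulation on \eqref{eq:hwn+1-i} for the $w$-part.

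There is one genuine point you gloss over. In passing from the semi-discrete manipulation \eqref{eq:energw_1}--\eqref{eq:energy_w} to its fully discrete analogue, an extra term appears with the wrong sign. Writing $(H_iw_i)^{n+1/2}w_i = H_i^{n+1/2}w_i^{n+1/2}w_i$ and completing the square gives
\[
\frac{H_i^{n+1/2}}{2}\bigl(w_i^{n+1/2}\bigr)^2
= (H_iw_i)^{n+1/2}w_i - \frac{H_i^{n+1/2}}{2}w_i^2 + \frac{H_i^{n+1/2}}{2}\bigl(w_i^{n+1/2}-w_i\bigr)^2,
\]
so after substituting the discrete mass balance one obtains, as in \eqref{eq:estim_entro_w},
\[
\frac{H_i^{n+1/2}}{2}\bigl(w_i^{n+1/2}\bigr)^2 - \frac{H_i}{2}w_i^2 + \sigma_i\bigl(\cdots\bigr)
\le \text{(upwind dissipation)} + \frac{H_i^{n+1/2}}{2}\bigl(w_i^{n+1/2}-w_i\bigr)^2,
\]
and the last term is \emph{nonnegative}: it is the explicit time-stepping error and is absent in the semi-discrete case. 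The paper closes this by observing from \eqref{eq:hwn+1-i} and the $H$-update that $w_i^{n+1/2}-w_i=O(\sigma_i)$, so this term is $O(\sigma_i^2)$ and can be absorbed by the $O(\sigma_i)$ upwind dissipation $\tfrac{\sigma_i}{2}[F_{H,i+1/2}]_-(w_{i+1}-w_i)^2-\tfrac{\sigma_i}{2}[F_{H,i-1/2}]_+(w_i-w_{i-1})^2$ for $\sigma_i$ small enough. Hence the CFL restriction enters not only through the cited hydrostatic result but also in the $w$-energy step; without this absorption argument your claimed inequality for the vertical kinetic energy does not hold.
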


\begin{remark}
Let us notice that the quadratic error term has the following key properties: it vanishes identically when $z=cst$ (no topography)
or when $\sigma_i\rightarrow 0$ (semi-discrete limit), and as soon as the topography
is Lipschitz continuous, it tends to zero strongly when the grid size tends to $0$
(consistency with the continuous entropy inequality~\eqref{eq:euler_55}),
even for non smooth solutions.
\end{remark}

For the correction step, we have the following discrete energy
balance.

\begin{proposition}
The numerical scheme~\eqref{eq:hu_cor},\eqref{eq:div_free_d} satisfies
the following inequality
\begin{multline*}
	\tilde{\eta}(X_i^{n+1}) \leq \tilde{\eta}(X^{n+1/2}_i)\\
	-\sigma_i \Bigl((Hu)_{i+1/2}^{n+1}p_{nh,i+1/2}^{n+1} -
  (Hu)^{n+1}_{i+1/2}p_{nh,i-1/2}^{n+1}\Bigr) - C_1^2 \bigl((\Delta t^n)^2 + C_2
  (\Delta x_i)^3\bigr).
	\label{eq:entro_correc}
\end{multline*}
Moreover, for $\sigma_i\approx 1$, $\Delta t^n$ small enough and assuming suitable boundary
conditions, we have
$$\sum_i \Delta x_i \left( \tilde{\eta}(X_i^{n+1}) -
  \tilde{\eta}(X^{n+1/2}_i) \right) \leq 0.$$
\label{prop:entropy_correc}
\end{proposition}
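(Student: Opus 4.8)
The plan is to adapt the time-correction computation of Proposition~\ref{prop:stability2} to the fully discrete setting, reusing the in-cell pressure-work manipulation already performed in the proof of Proposition~\ref{prop:entropy_semi_d}. First I would exploit that the correction step leaves the water depth unchanged, $H_i^{n+1}=H_i^{n+1/2}$ by~\eqref{eq:h_cor}, so the hydrostatic and topography parts of $\tilde{\eta}$ are untouched and only the kinetic energy varies. Taking the Euclidean scalar product of the velocity correction~\eqref{eq:hu_cor} with the updated momentum $(H{\bf u})_i^{n+1}$, and applying componentwise the polarization identity $H a(a-b)=\tfrac{H}{2}(a^2-b^2)+\tfrac{H}{2}(a-b)^2$ exactly as in~\eqref{eq:energ_semidt3} and~\eqref{eq:energ_semidt4}, yields the identity
\begin{equation*}
\tilde{\eta}(X_i^{n+1}) = \tilde{\eta}(X_i^{n+1/2}) - \Delta t^n\, {\bf u}_i^{n+1}\cdot \nablaswd p_{nh}^{n+1} - \frac{H_i^{n+1/2}}{2}\bigl|{\bf u}_i^{n+1}-{\bf u}_i^{n+1/2}\bigr|^2.
\end{equation*}
The last term is \emph{non-positive} by the positivity $H_i^{n+1/2}\geq 0$ (Proposition~\ref{prop:scheme_prop}\,{\it (i)}), and since ${\bf u}_i^{n+1}-{\bf u}_i^{n+1/2}=-(\Delta t^n/H_i^{n+1})\nablaswd p_{nh}^{n+1}$ it is of size $(\Delta t^n)^2$; this is the origin of the $(\Delta t^n)^2$ contribution.

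The second step is to rewrite the discrete pressure work $\Delta x_i\,{\bf u}_i^{n+1}\cdot\nablaswd p_{nh}^{n+1}$ in conservative flux form. Using the explicit definitions~\eqref{eq:pnh_sd1}--\eqref{eq:pnh_sd2} of $\nablaswd$ together with the discrete divergence-free constraint~\eqref{eq:div_free_d} multiplied by $\tfrac12 p_{nh,i+1/2}^{n+1}$, I would reproduce the manipulation~\eqref{eq:entro_sd_p1}--\eqref{eq:entro_sd_p3} from the proof of Proposition~\ref{prop:entropy_semi_d}, now at the time level $n+1$. This gives
\begin{equation*}
\Delta x_i\,{\bf u}_i^{n+1}\cdot\nablaswd p_{nh}^{n+1} = \Bigl((Hu)_{i+1/2}^{n+1}p_{nh,i+1/2}^{n+1} - (Hu)_{i-1/2}^{n+1}p_{nh,i-1/2}^{n+1}\Bigr) + \bigl(d_{i+1/2}-d_{i-1/2}\bigr),
\end{equation*}
with the same telescoping remainders $d_{i\pm1/2}$ as before, controlled by ${\cal O}(\Delta x^3)$ for smooth data (and with the ${\cal O}(1)$ factor $\sigma_i$ absorbed under the CFL regime $\sigma_i\approx 1$); this is the origin of the $\Delta x_i^3$ contribution. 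Dividing by $\Delta x_i$, recalling $\sigma_i=\Delta t^n/\Delta x_i$, and bundling the two remainders into $-C_1^2\bigl((\Delta t^n)^2+C_2\Delta x_i^3\bigr)$ produces the announced in-cell inequality.

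For the global estimate I would sum the in-cell relation against the weights $\Delta x_i$. The flux difference telescopes to a boundary term, and since $\Delta x_i\sigma_i=\Delta t^n$ the error $\sum_i\Delta t^n(d_{i+1/2}-d_{i-1/2})$ also telescopes to a boundary contribution; both vanish under the boundary conditions of paragraph~\ref{subsec:BC} (either $p_{nh}=0$ or $(Hu)$ prescribed at the ends). What survives is $-\sum_i \tfrac{(\Delta t^n)^2}{2H_i^{n+1}}|\nablaswd p_{nh}^{n+1}|^2\leq 0$, which yields the claimed decrease of the total energy for $\sigma_i\approx 1$ and $\Delta t^n$ small enough.

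The delicate point is the pressure-work rewriting of the second step. The discrete operators $\nablaswd$ and $\divswd$ are adjoint only \emph{globally}, after summation and up to boundary terms, and not cell by cell; consequently the constraint $\divswd{\bf u}^{n+1}=0$ does \emph{not} annihilate the local pressure work, and the residual mismatch is precisely $d_{i\pm1/2}$. The main effort is therefore to organize the algebra so that this mismatch appears in the telescoping form $d_{i+1/2}-d_{i-1/2}$ (so that it cancels upon summation) and to bound it by ${\cal O}(\Delta x^3)$ for smooth solutions, exactly as in Proposition~\ref{prop:entropy_semi_d}. The time-discretization error, by contrast, is automatically non-positive and needs no sign discussion.
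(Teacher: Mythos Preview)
Your proposal is correct and follows essentially the same approach as the paper: multiply the correction~\eqref{eq:hu_cor} by $(H{\bf u})_i^{n+1}$ to obtain the kinetic-energy identity with the non-positive quadratic remainder $-\tfrac{H_i^{n+1/2}}{2}|{\bf u}_i^{n+1}-{\bf u}_i^{n+1/2}|^2$, then reuse verbatim the pressure-work manipulation~\eqref{eq:entro_sd_p1}--\eqref{eq:entro_sd_p3} from the proof of Proposition~\ref{prop:entropy_semi_d} at time level $n+1$ to exhibit the flux difference and the telescoping residuals $d_{i\pm1/2}={\cal O}(\Delta x^3)$. Your discussion of the global estimate (telescoping of both the flux and the $d$-terms under the boundary conditions of paragraph~\ref{subsec:BC}) is in fact more explicit than the paper's, which simply observes that for $\sigma_i\approx 1$ and $\Delta t^n$ small the combined error $-\tfrac{H_i^{n+1/2}}{2}|{\bf u}_i^{n+1}-{\bf u}_i^{n+1/2}|^2+\sigma_i(d_{i+1/2}-d_{i-1/2})\leq 0$.
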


\begin{corollary}
The numerical scheme detailed in paragraphs~\ref{subsec:prediction} and~\ref{subsec:correction}
satisfies the fully discrete entropy inequality
\begin{multline*}
\tilde{\eta}(X_i^{n+1}) \leq \tilde{\eta}(X_i)
	-\sigma_i\Bigl(\widehat G_{i+1/2}-\widehat G_{i-1/2}\Bigr)\\
	\dsp\hphantom{\eta(U_i^{n+1})+gz_{b,i}H_i^{n+1}\leq}
	+C_\beta (\sigma_iv_m)^2\biggl(g(z_{b,i+1}-z_{b,i})^2+g(z_{b,i}-z_{b,i-1})^2 \biggr) - C_1^2 (\sigma_i^2 + C_2
  \Delta t^n),
\end{multline*}
with $\widehat G_{i+1/2} = \widetilde G_{i+1/2} + (Hu)_{i+1/2}p_{nh,i+1/2}$.
\label{corol:entropy_tot}
\end{corollary}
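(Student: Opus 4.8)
The plan is to obtain the corollary as a direct composition of the two preceding propositions, with no new estimate required: Proposition~\ref{prop:entropy_predic} bounds $\tilde{\eta}(X_i^{n+1/2})$ in terms of $\tilde{\eta}(X_i)$, and Proposition~\ref{prop:entropy_correc} bounds $\tilde{\eta}(X_i^{n+1})$ in terms of $\tilde{\eta}(X_i^{n+1/2})$. Concretely, I would start from the correction inequality and substitute into its right-hand side the prediction bound for $\tilde{\eta}(X_i^{n+1/2})$. All the work is then in reorganising the flux and error contributions that the two half-steps produce.

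The core of the argument is the assembly of the numerical fluxes. The prediction step supplies the discrete divergence $-\sigma_i(\widetilde{G}_{i+1/2}-\widetilde{G}_{i-1/2})$, where $\widetilde{G}_{i+1/2}=\widetilde{G}_{hyd,i+1/2}+F_{H,i+1/2}w_{i+1/2}^2/2$ already carries both the hydrostatic flux and the vertical kinetic-energy flux. The correction step supplies the pressure-work terms, which appear in telescoping form at the interfaces as $-\sigma_i\bigl((Hu)_{i+1/2}^{n+1}p_{nh,i+1/2}^{n+1}-(Hu)_{i-1/2}^{n+1}p_{nh,i-1/2}^{n+1}\bigr)$. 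Adding the two, the interface quantities at $i+1/2$ merge into $\widetilde{G}_{i+1/2}+(Hu)_{i+1/2}p_{nh,i+1/2}=\widehat{G}_{i+1/2}$, and identically at $i-1/2$, so that the combined flux is exactly $-\sigma_i(\widehat{G}_{i+1/2}-\widehat{G}_{i-1/2})$ with $\widehat{G}_{i+1/2}$ as defined in the statement. I would then propagate the two remainder terms: the positive quadratic topography error $+C_\beta(\sigma_i v_m)^2\bigl(g(z_{b,i+1}-z_{b,i})^2+g(z_{b,i}-z_{b,i-1})^2\bigr)$ of the explicit prediction step is carried through unchanged, while the favorable (nonpositive) dissipative error $-C_1^2\bigl((\Delta t^n)^2+C_2(\Delta x_i)^3\bigr)$ of the correction step is rewritten, using $\sigma_i=\Delta t^n/\Delta x_i$ and relabelling the generic constants $C_1,C_2$, into the stated form $-C_1^2(\sigma_i^2+C_2\Delta t^n)$; here one uses that the correction estimate is applied in the regime $\sigma_i\approx 1$, so $\Delta x_i$ and $\Delta t^n$ are comparable. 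Collecting the flux and the two error terms yields precisely the claimed inequality.

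The only genuinely delicate point is the telescoping in the middle step: one must verify that the pressure-work contribution of the correction step really equals the difference of the interface quantities $(Hu)_{i\pm1/2}\,p_{nh,i\pm1/2}$, so that it merges cleanly with $\widetilde{G}$ into the single flux $\widehat{G}$. This is already ensured by the form of the estimate in Proposition~\ref{prop:entropy_correc}, itself a consequence of the discrete duality relation~\eqref{eq:div_grad}, so no hypothesis beyond those of the two propositions is needed; in particular, the smallness condition on $\sigma_i$ enters only through the prediction step, and the reconciliation of the error orders is a matter of constant bookkeeping rather than a new analytic difficulty.
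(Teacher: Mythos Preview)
Your proposal is correct and follows essentially the same approach as the paper: the proof there is the single sentence ``The sum of the two inequalities obtained in props.~\ref{prop:entropy_predic} and~\ref{prop:entropy_correc} gives the result.'' Your more detailed account of how the fluxes telescope into $\widehat G_{i\pm1/2}$ and how the error terms are recast is faithful to what that summation actually entails.
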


\begin{proof}[Proof of prop.~\ref{prop:entropy_predic}]
With $\eta_{hyd}(X)$ defined by~\eqref{eq:eta_hyd}, we start from the inequality
\begin{equation}\begin{array}{l}
	\dsp \tilde{\eta}_{hyd}(X_i^{n+1/2})\leq \tilde{\eta}_{hyd}(X_i)
	-\sigma_i\Bigl(\widetilde G_{hyd,i+1/2}-\widetilde G_{hyd,i-1/2}\Bigr)\\
	\dsp\hphantom{\eta(U_i^{n+1})+gz_{b,i}H_i^{n+1}\leq}
	+C_\beta (\sigma_iv_m)^2\biggl(g(z_{b,i+1}-z_{b,i})^2+g(z_{b,i}-z_{b,i-1})^2 \biggr),
	\label{eq:estentrint_hyd}
	\end{array}
\end{equation}
that is proved in~\cite[Corollary~3.8]{JSM_entro}. Equation~\eqref{eq:estentrint_hyd} corresponds to a fully discrete entropy inequality for the
Saint-Venant system including the topography source term.

Multiplying~\eqref{eq:hwn+1-i} by $w_i$ leads to
$$
(H_i w_i)^{n+1/2}w_i = H_i w_i^2 - \sigma_i \Bigl( w_{i+1/2}w_i F^{kin}_{H,i+1/2}
- w_{i-1/2}w_iF^{kin}_{H,i-1/2},\Bigr),
$$
with
\begin{multline*}
(H_i w_i)^{n+1/2}w_i - H_i w_i^2 = \frac{H_i^{n+1/2}}{2}
  \bigl(w_i^{n+1/2}\bigr)^2 -
\frac{H_i}{2}w_i^2 \\
+ \frac{w_i^2}{2} (H_i^{n+1/2} - H_i)
- \frac{H_i^{n+1/2}}{2} (w_i^{n+1/2} - w_i)^2,
\end{multline*}
and
\begin{multline*}
w_{i+1/2}w_i F^{kin}_{H,i+1/2} - w_{i-1/2}w_i F^{kin}_{H,i-1/2}  =
\frac{w_{i+1/2}^2}{2}F^{kin}_{H,i+1/2}  - 
\frac{w_{i-1/2}^2}{2}F^{kin}_{H,i-1/2} \\
+ w_{i+1/2} F^{kin}_{H,i+1/2} \left( w_i - \frac{w_{i+1/2}}{2}\right)
- w_{i-1/2} F^{kin}_{H,i-1/2} \left( w_i - \frac{w_{i-1/2}}{2}\right).
\end{multline*}
The sum of the two previous relations gives
\begin{multline}
\frac{H_i^{n+1/2}}{2}\bigl(w_i^2\bigr)^{n+1/2} -
\frac{H_i}{2}w_i^2  + \sigma_i \Bigl(\frac{w_{i+1/2}^2}{2}F^{kin}_{H,i+1/2}  - 
\frac{w_{i-1/2}^2}{2}F^{kin}_{H,i-1/2}\Bigr)\\
\leq \frac{\sigma_i}{2}\left[F^{kin}_{H,i+1/2}\right]_-
  (w_{i+1} - w_i)^2 - \frac{\sigma_i}{2}\left[F^{kin}_{H,i-1/2}\right]_+
  (w_{i} - w_{i-1})^2 + \frac{H_i^{n+1/2}}{2} (w_i^{n+1/2} - w_i)^2.\label{eq:estim_entro_w}
\end{multline}
It remains to estimate, when $H_i^{n+1/2}>0$, the quantity
$$\frac{H_i^{n+1/2}}{2} (w_i^{n+1/2} - w_i)^2,$$
in the r.h.s. of Eq.~\eqref{eq:estim_entro_w}.

We have
\begin{eqnarray*}
w_i^{n+1/2} - w_i  & = & \frac{1}{H_i^{n+1/2}} \Bigl( (Hw)_i^{n+1/2} -
(Hw)_i - (H_i^{n+1/2} -H_i) w_i\Bigr)\\
& = & \frac{\sigma_i}{H_i^{n+1/2}} \Bigl( F^{kin}_{H,i+1/2} (w_i - w_{i+1/2}) -
F^{kin}_{H,i-1/2} (w_i - w_{i-1/2})\Bigr),
\end{eqnarray*}
and hence
\begin{eqnarray*}
\frac{H_i^{n+1/2}}{2} (w_i^{n+1/2} - w_i)^2  & \leq & \frac{\sigma_i^2}{H_i^{n+1/2}} \Bigl( (F^{kin}_{H,i+1/2})^2 (w_i - w_{i+1/2})^2 +
(F^{kin}_{H,i-1/2})^2 (w_i - w_{i-1/2})^2\Bigr).
\end{eqnarray*}

Therefore, for $\sigma_i$ small enough, the r.h.s. of
Eq.~\eqref{eq:estim_entro_w} is non
positive with
\begin{equation*}
\frac{H_i^{n+1/2}}{2}\bigl( w_i^2\bigr)^{n+1/2} -
\frac{H_i}{2}w_i^2  + \sigma_i \Bigl(\frac{w_{i+1/2}^2}{2}F^{kin}_{H,i+1/2}  - 
\frac{w_{i-1/2}^2}{2}F^{kin}_{H,i-1/2}\Bigr)
\leq -C^2_1\sigma_i(1-C_2^2\sigma_i).
\end{equation*}
The previous relation coupled with~\eqref{eq:estentrint_hyd} gives the result.
\end{proof}

\begin{proof}[Proof of prop.~\ref{prop:entropy_correc}]
We start from relation~\eqref{eq:hu_cor} multiplied by $H_i^{n+1}{\bf
  u}_i^{n+1}$, this gives
\begin{multline*}
\left(\frac{H_i}{2}{\bf u}_i^2\right)^{n+1} - \left(\frac{H_i}{2}{\bf
    u}_i^2\right)^{n+1/2} +\Delta t^n \nablaswd p_{nh}^{n+1} . {\bf
  u}^{n+1}_i = - \frac{H_i^{n+1/2}}{2}\left({\bf
    u}_i^{n+1}  - {\bf u}_i^{n+1/2} \right)^2,
\end{multline*}
with the notation ${\bf u}^2 = {\bf u}.{\bf u}$.

Omitting in this part the
superscript $^{n+1}$ and as in the the proof
of prop.~\ref{prop:entropy_semi_d}, simple manipulations give
\begin{eqnarray*}
\Delta x_i \nablaswd p_{nh}  . \begin{pmatrix}
u_i\\
w_i
\end{pmatrix} & = & \Bigl(  (Hu)_{i+1/2}p_{nh,i+1/2} -
  (Hu)_{i-1/2}p_{nh,i-1/2}\Bigr) + d_{i+1/2} - d_{i-1/2},
\end{eqnarray*}
with
\begin{eqnarray*}
d_{i+1/2} & = & \frac{p_{nh,i+1/2}}{2} \Bigl( \Delta x_{i+1/2}
(w_{i+1} -w_i)  - \frac{u_{i+1}-u_i}{2}\bigl(H_{i+1} +
  2z_{b,i+1} - (H_{i}+2z_{b,i})\bigr)\Bigr),\\
d_{i-1/2} & = & \frac{p_{nh,i-1/2}}{2} \Bigl( \Delta x_{i-1/2} (w_{i}
-w_{i-1}) - \frac{u_i - u_{i-1}}{2}\bigl(H_{i} +
  2z_{b,i} - (H_{i-1}+2z_{b,i-1})\bigr)\Bigl),
\end{eqnarray*}
proving the result.

Assuming the variables are smooth enough, the quantities $d_{i+1/2}$,$d_{i-1/2}$ satisfy $d_{i+1/2} - d_{i-1/2} = {\cal O}(\Delta x^3)$
and we have
$$\Delta x_i \begin{pmatrix}
u_i\\
w_i
\end{pmatrix} . \nablaswd p_{nh} = \Bigl( (Hu)_{i+1/2}p_{nh,i+1/2} -
   (Hu)_{i-1/2}p_{nh,i-1/2}\Bigr) + {\cal O}(\Delta x)^3,$$
that completes the proof.

Notice that in the limit $\Delta t^n\rightarrow 0$, $\sigma_i \approx
1$, the inequality
$$ -\frac{H_i^{n+1/2}}{2}\left({\bf
    u}_i^{n+1}  - {\bf u}_i^{n+1/2} \right)^2 + \sigma_i(d_{i+1/2} -
d_{i-1/2}) \leq 0,$$
holds, meaning the correction step ensures a decrease of the entropy.
\end{proof}

\begin{proof}[Proof of corollary~\ref{corol:entropy_tot}]
The sum of the two inequalities obtained in props.~\ref{prop:entropy_predic} and~\ref{prop:entropy_correc}
gives the result.
\end{proof}

\section{Analytical solutions}
\label{sec:anal}

Stationary and time dependent analytical solutions are available for
the model~\eqref{eq:euler_sw1}-\eqref{eq:euler_sw3}, see~\cite{JSM_nhyd} and references
therein. In this section we only briefly recall some of them, they will be
very useful to evaluate the properties of the proposed numerical
scheme, see paragraph~\ref{sec:num_valid}.

\subsection{Time dependent analytical solution}

\subsubsection{Parabolic bowl}
\label{subsec:thacker_nh}

The functions defined by
\begin{eqnarray}
H(x, t) & = & \max \left( H_0 -\frac{b_2}{2} \left( x-\int^t_{\tilde{t}^0}
    f(t_1)dt_1\right)^2, 0 \right), \label{eq:sol1}\\
\overline{u}(x,t) & = & f(t)\1_{H>0}, \label{eq:sol2}\\
\overline{w}(x,t) & = & b_2 x f(t) \1_{H>0}, \label{eq:sol3}\\
z_b(x) & = & b_1+\frac{b_2}{2} x^2, \label{eq:sol4}\\
\overline{p}_{nh}(x,t) & = & \frac{b_2f^2}{2}H^2, \label{eq:sol5}\\
s(x,z,t) & = & b_2 x \frac{d f}{ d t},  \label{eq:sol6}
\end{eqnarray}
where $H_0>0,b_1,b_2$ are constants and the function $f$ satisfies the ODE
\begin{equation}
\frac{d f}{dt} +b_2 (g+b_2 f^2) \int^t_{\tilde{t}^0}
f(t_1)dt_1= 0,\quad f(t_0) = f^0, \;\; \tilde{t}^0\in \R,
\label{eq:f}
\end{equation}
are solutions of the system
\begin{align}
&\frac{\partial H}{\partial t} + \frac{\partial}{\partial x} \bigl(H\overline{u}\bigr) = 0, \label{eq:thacker_11}\\
&\frac{\partial}{\partial t}(H\overline{u}) + \frac{\partial}{\partial x}\left(H \overline{u}^2
+ \frac{g}{2}H^2 + H\overline{p}_{nh}\right) = - (gH +
2 \overline{p}_{nh})\frac{\partial
  z_b}{\partial x},\label{eq:thacker_22}\\
&\frac{\partial}{\partial t}(H\overline{w})  + \frac{\partial}{\partial
  x}(H\overline{w}\overline{u}) = 2
\overline{p}_{nh} + Hs,\label{eq:thacker_33}\\
& \frac{\partial (H\overline{u})}{\partial x} -
\overline{u} \frac{\partial (H+2z_b)}{\partial x} + 2\overline{w} = 0,
\label{eq:thacker_44}
\end{align}
that corresponds to~\eqref{eq:euler_sw1}-\eqref{eq:euler_sw2}
completed with a source term $Hs$ in the momentum equation~\eqref{eq:thacker_33}.

\subsubsection{Solitary wave solutions}
\label{subsec:solitary}

The system~\eqref{eq:euler_sw1}-\eqref{eq:euler_sw2} admits
solitary waves having the form
\begin{eqnarray}
H =  H_0 + a \left( \sech\left(\frac{x-c_0t}{l}\right) \right)^2,\label{eq:soliton1}\\
\overline{u}  =  c_0 \left( 1 - \frac{d}{H} \right),\label{eq:soliton2}\\
\overline{w} = -\frac{ac_0 d}{l H}\sech \left(\frac{x-c_0t}{l}\right)
\sech' \left(\frac{x-c_0t}{l}\right),\label{eq:soliton3}\\
\overline{p}_{nh} = \frac{ac_0^2d^2}{2l^2H^2} \left( (2H_0-H)
  \left(\sech' \left(\frac{x-c_0t}{l}\right)\right)^2\right. \nonumber\\
\qquad\left. + H \sech \left(\frac{x-c_0t}{l}\right) \sech'' \left(\frac{x-c_0t}{l}\right)\right),\label{eq:soliton4}
\end{eqnarray}
where $\varphi'$ denotes the derivative of function $\varphi$,
$$c_0 = \frac{l}{d}\sqrt{\frac{g H^3_0}{l^2-H^2_0}},\quad\quad
a=\frac{H^3_0}{l^2-H^2_0},$$
and $(d,l,H_0) \in \R^3$ are given constants with $l>H_0>0$.

\section{Numerical simulations}
\label{sec:num_valid}

A complete validation of the proposed numerical technique is not in
the scope of this paper and will be investigated in a forthcoming
paper. We focus on two typical situations, described in
paragraphs~\ref{subsec:thacker_nh} and~\ref{subsec:solitary} where
analytical solutions exist.

For the numerical test, we use a 2$^{nd}$ order extension of the space
discretization for the prediction step. The second order extension is
built as in~\cite{bristeau}. For the second order extension of the
time scheme, we use a Heun type scheme, see~\cite{heun}.

\subsection{The parabolic bowl}

At the discrete level, the analytical solution given in paragraph~\ref{subsec:thacker_nh}
(see also~\cite{JSM_nhyd}) is particularly difficult to
capture. Indeed, it is a non stationary solution and the flow exhibits wet/dry interfaces all along the simulation.

With the parameter values $H_0=1$, $a=1$, $b_1=0$, $b_2 =1$, $t^0 = 0$ over the geometrical
domain $[-2,2]$ and with the initial conditions (see Fig.~\ref{fig:thacker_init})
\begin{eqnarray*}
&& \int_{\tilde{t}^0}^{t^0} f(t)\ dt = \frac{a}{\sqrt{g b_2}},\\
&& f(t_0) = f^0 = 0,
\end{eqnarray*}
we have calculated~-- with a simple Runge-Kutta scheme~-- the
solution of the ODE~\eqref{eq:f}. The solution has been calculated with a
very fine time discretization and thus can be considered as a
reference solution, very close to the analytical solution of
Eq.~\eqref{eq:f}. This means we have at our disposal an
analytical solution for the system~\eqref{eq:thacker_11}-\eqref{eq:thacker_44}.
\begin{figure}[hbtp]
\begin{center}
\includegraphics[width=12cm]{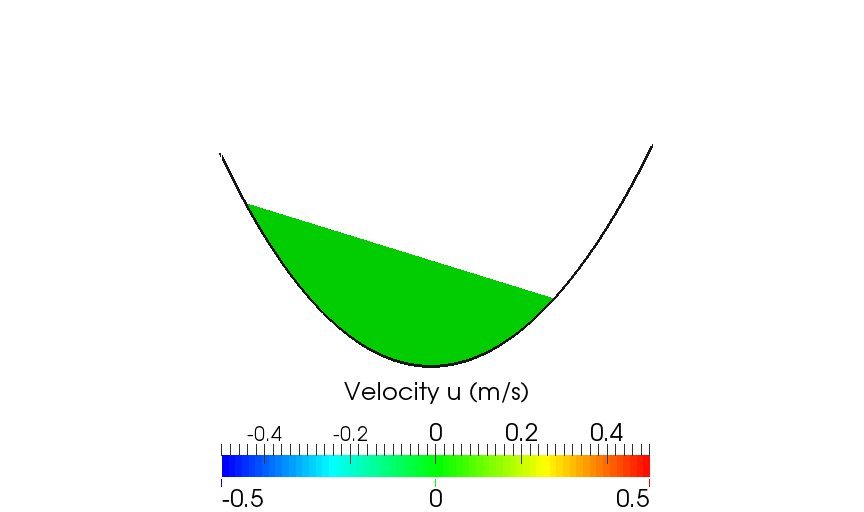}
\end{center}
\caption{Initial conditions for the simulation of the ``parabolic bowl''
  (parabolic bottom, water depth and null horizontal velocity).}
\label{fig:thacker_init}
\end{figure}

To illustrate the behavior of the solution in such a situation, we give over
Fig~\ref{fig:conv_thacker_0_8} the variations along time of the water
depth at $x_0 = 0.8$ m for a mesh of 80 cells which is a rather
coarse mesh.

In order to evaluate the convergence rate of the simulated solution
towards the analytical one, we plot the error rate versus the space
discretization. We have plotted in Fig.~\ref{fig:conv_thacker_nh} the
$\mbox{log}(L^1-error)$ over the water depth at time $T=10$ seconds~-- corresponding to more than 5 periods~-- versus
$\mbox{log}(h_0/h_i)$ for the first and second-order schemes and they
are compared to the theoretical order. We denote by $h_i$ the average
cell length, $h_0$ is the average cell length of the
coarser space discretization. These errors have been computed on 6 meshes with $20$,
$40$, $80$, $120$, $300$ and $400$ cells. 

For this test case, the errors due to the time and space schemes are
combined so the convergence rate of the simulated solution towards the
analytical one is more difficult to analyze. Despite this fact, it appears that the
computed convergence rates are close to the theoretical ones. It
emphasizes the performance of the proposed numerical technique.

 Over Fig~\ref{fig:conv_thacker_nh_x0}, we have plotted the variations along time of the
$\mbox{log}(L^1-error)$ over the water depth for the mesh with 80
cells calculated at node $x_0 = 0.8$ m i.e. the quantity
$$t \mapsto \log \left( 100 \frac{\left| H_{sim}(x_0,t) -
      H_{anal}(x_0,t) \right|}{H_{sim}(x_0,t)}\right).$$
It appears that this quantity does not increase with time and remains bounded.

\begin{figure}[hbtp]
\begin{center}
\includegraphics[width=10.5cm]{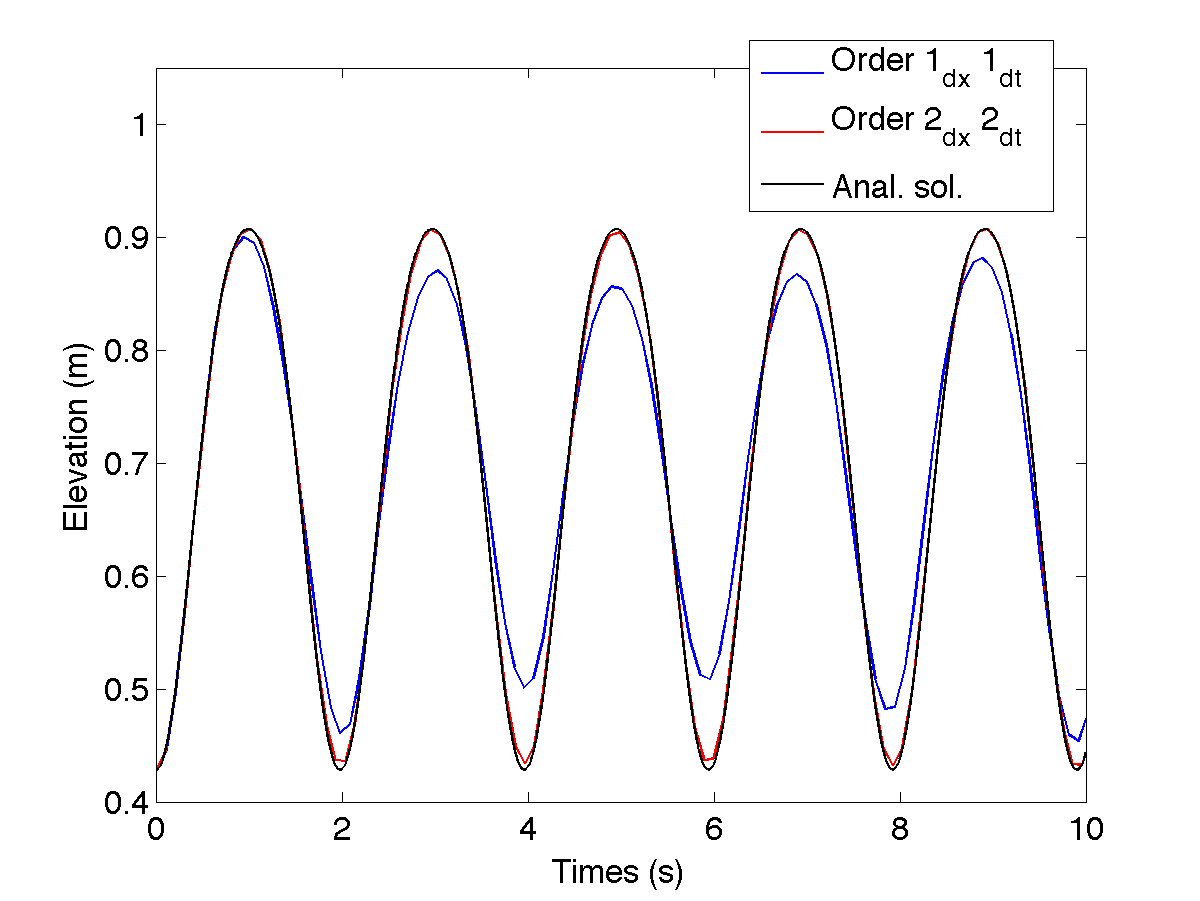}
\end{center}
\caption{Parabolic bowl: variations of $t \mapsto H(0.8,t)$ -
  analytical solution and simulated one with the
  first order (space and time) and second order (space and time) schemes.}
\label{fig:conv_thacker_0_8}
\end{figure}

\begin{figure}[hbtp]
\begin{center}
\includegraphics[width=10.5cm]{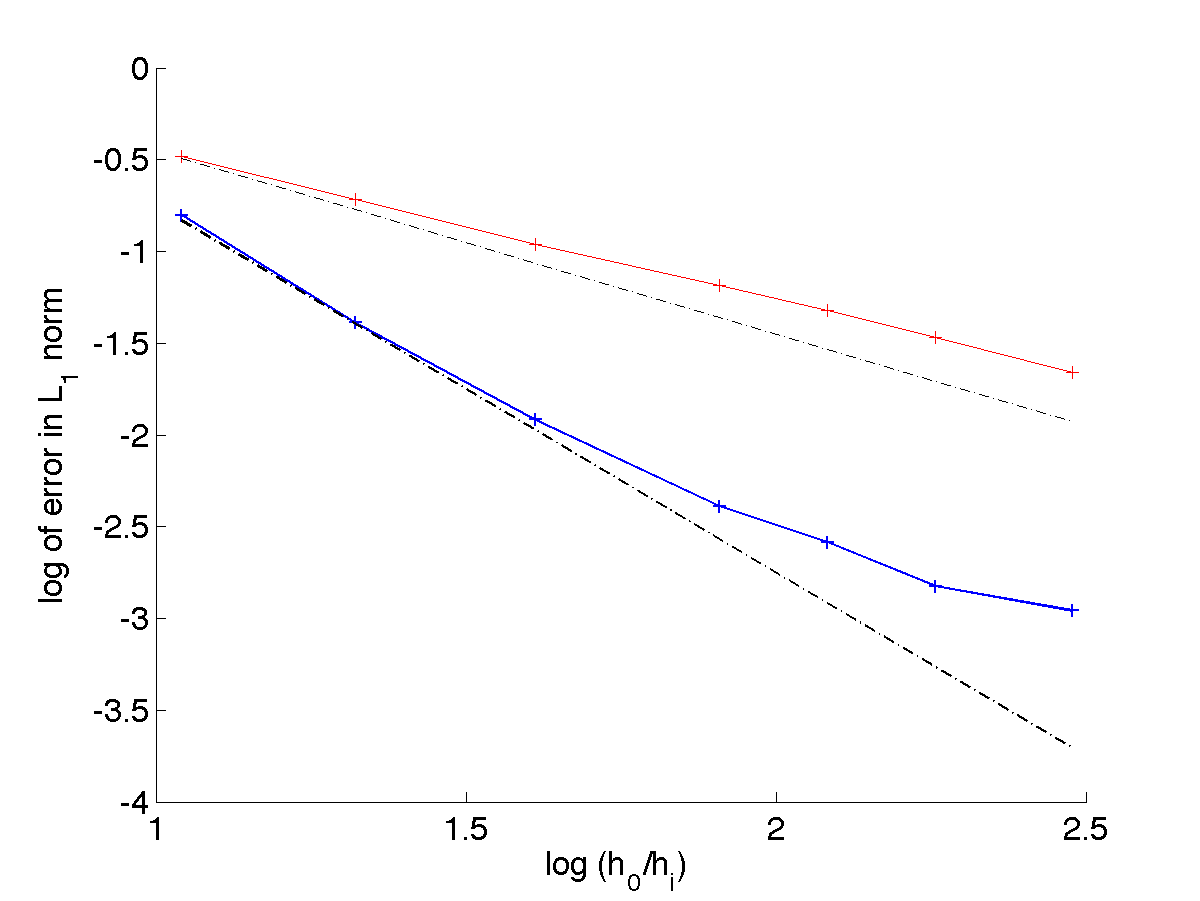}
\end{center}
\caption{Parabolic bowl: convergence rates to the reference solution, 1$^{st}$ order
schemes (space and time) and 2$^{nd}$ order
schemes (space and time), '-.' theoretical order.}
\label{fig:conv_thacker_nh}
\end{figure}

\begin{figure}[hbtp]
\begin{center}
\includegraphics[width=10.5cm]{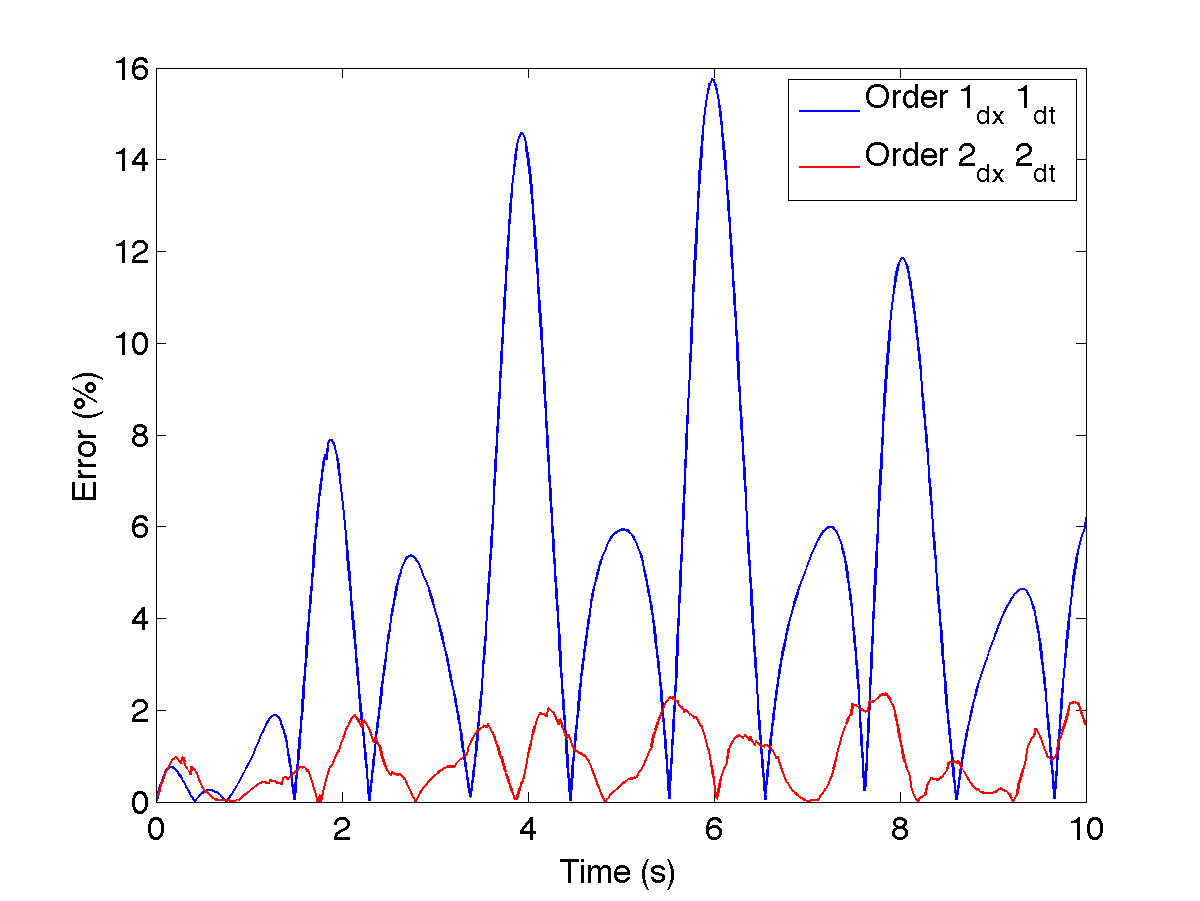}
\end{center}
\caption{Parabolic bowl: variations along time of the error at node $x_0=0.8$ m, 1$^{st}$ order
schemes (space and time) and 2$^{nd}$ order
schemes (space and time).}
\label{fig:conv_thacker_nh_x0}
\end{figure}

\subsection{The solitary wave}

As in the previous paragraph, we are able to examine the convergence
rate of the simulated solution towards the analytical one (given in
paragraph~\ref{subsec:solitary}).

We consider the analytical solution corresponding to the choices
$H_0=1$ m, $l = 1.7$ m, $d=1$ m. These choices lead to $a=0.5291$ m
and $c_0 = 3.873$ m.s$^{-1}$. We compare the analytical solution and
its simulated version at time $t=6$ s.

We have plotted in Fig.~\ref{fig:conv_soliton} the
$\mbox{log}(L^1-error)$ over the water depth at time $T=6$ seconds versus
$\mbox{log}(h_0/h_i)$ for the first and second-order scheme and they
are compared to the theoretical order. These errors have been computed on 7 meshes with $80$,
$120$, $200$, $400$, $800$, $1600$ and $3200$ cells.

For the curves obtained over Fig.~\ref{fig:conv_soliton}, the soliton
is~-- at the initial instant~-- in the fluid domain meaning the numerical treatment of
the boundary condition does not play a crucial role. Figure~\ref{fig:conv_soliton_ext} is similar to
Fig.~\ref{fig:conv_soliton} except that the soliton is not within
the fluid domain at the initial instant but enters the channel by the left
boundary. The convergence order of the scheme is examined at time
$T=10$ seconds and the soliton arrives within the fluid domain after 4 seconds
of simulation. Hence the soliton propagates during 6 seconds within
the domain corresponding to the same situation
as~Fig.~\ref{fig:conv_soliton}. Following~\ref{subsec:BC}, we have imposed a
given flux at the entry of the domain. We notice that the convergence orders obtained over Figs.~\ref{fig:conv_soliton}
and~\ref{fig:conv_soliton_ext} are similar.
\begin{figure}[hbtp]
\begin{center}
\includegraphics[width=10.5cm]{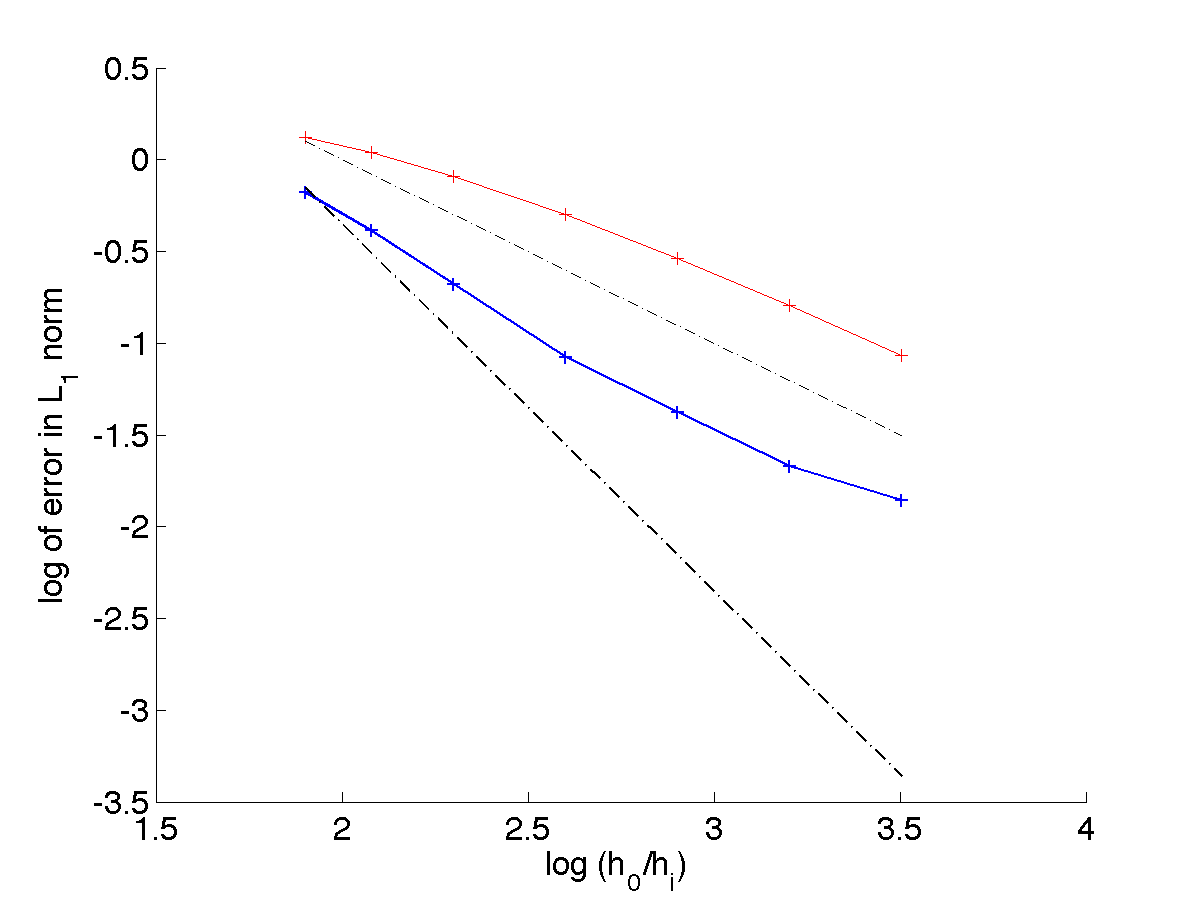}
\end{center}
\caption{Soliton (interior of the domain): convergence rates to the reference solution, 1$^{st}$ order
schemes (space and time) and 2$^{nd}$ order
schemes (space and time), '-.' theoretical order.}
\label{fig:conv_soliton}
\end{figure}
\begin{figure}[hbtp]
\begin{center}
\includegraphics[width=10.5cm]{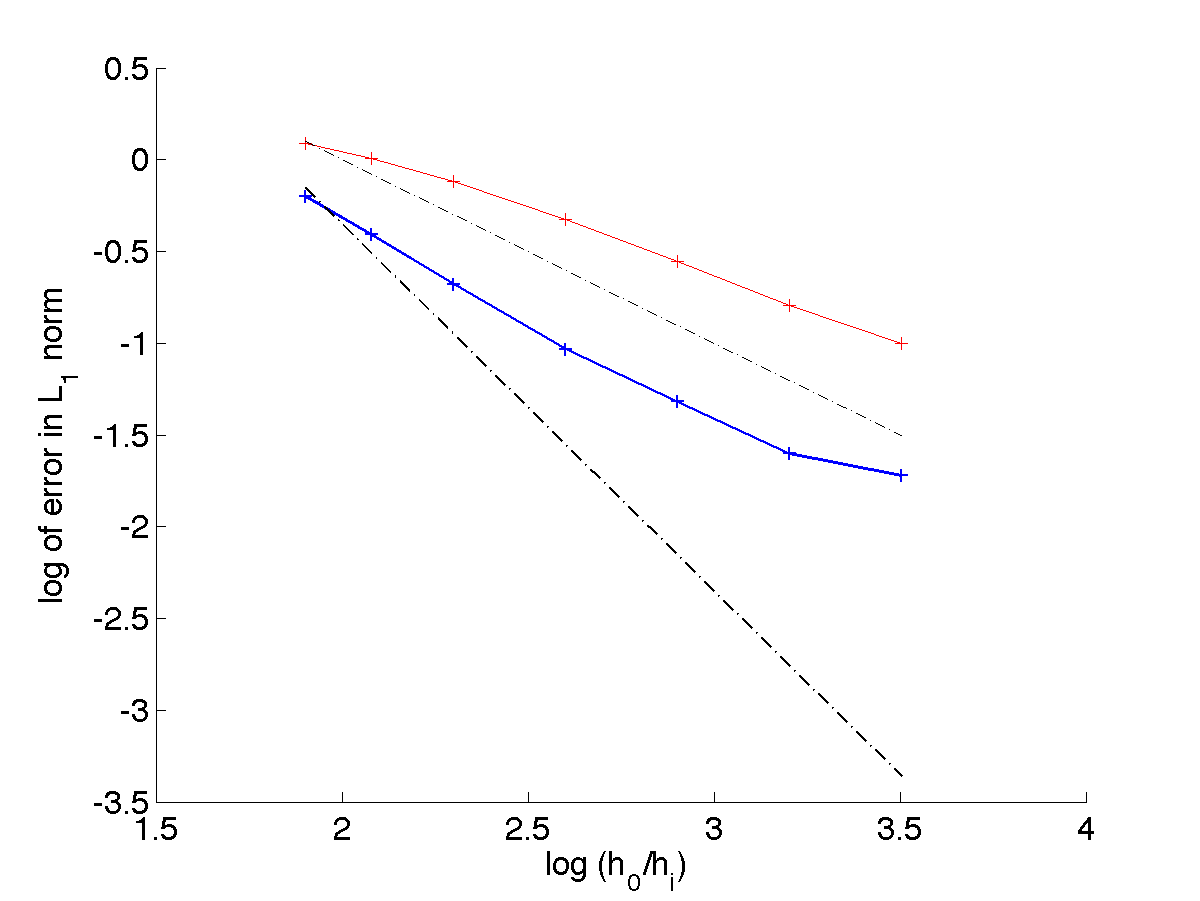}
\end{center}
\caption{Soliton (entering in the domain): convergence rates to the reference solution, 1$^{st}$ order
schemes (space and time) and 2$^{nd}$ order
schemes (space and time), '-.' theoretical order.}
\label{fig:conv_soliton_ext}
\end{figure}

\section{Conclusion}

In this paper we have proposed a robust and efficient numerical scheme
for a non-hydrostatic shallow water type model approximating the
incompressible Euler system with free surface.

The correction step is the key point of the scheme and
especially the discretization of the shallow water type version of the
gradient and divergence operators. A finite difference strategy has been used to discretize this
correction step. In order to be able to treat 2d flows on
unstructured meshes, a variational approximation of the correction step
is required, it is presented and its numerical performance is evaluated in~\cite{nora2}.

\section*{Acknowledgments}
The authors wish to express their warm thanks to Anne Mangeney for many fruitful discussions.

\bibliographystyle{amsplain}
\bibliography{boussinesq}

\end{document}